\newtheorem{definition}{Definition}
\newtheorem{theorem}[definition]{Theorem}
\newtheorem{proposition}[definition]{Proposition}
\newtheorem{lemma}[definition]{Lemma}
\newtheorem{deflemma}[definition]{Definition-Lemma}
\newcommand{\nd}{\noindent}
\newcommand{\dC}{{\mathds C}}
\newcommand{\dN}{{\mathds N}}
\newcommand{\dZ}{{\mathds Z}}
\newcommand{\dP}{{\mathds P}}
\newcommand{\dH}{{\mathbb H}}
\newcommand{\bD}{{\mathbb D}}
\newcommand{\cD}{\mathcal{D}}
\newcommand{\cE}{\mathcal{E}}
\newcommand{\cF}{\mathcal{F}}
\newcommand{\cG}{\mathcal{G}}
\newcommand{\cH}{\mathcal{H}}
\newcommand{\cK}{\mathcal{K}}
\newcommand{\cM}{\mathcal{M}}
\newcommand{\cO}{\mathcal{O}}
\newcommand{\cR}{\mathcal{R}}
\newcommand{\cT}{\mathcal{T}}
\newcommand{\D}{\displaystyle}
\newcommand{\SC}{\scriptstyle}
\DeclareMathOperator{\Spec}{\textup{Spec}\,}
\DeclareMathOperator{\diag}{\textup{diag}}
\DeclareMathOperator{\vol}{\textup{vol}}
\DeclareMathOperator{\gr}{\textup{gr}}
\DeclareMathOperator{\Der}{Der}
\DeclareMathOperator{\Gl}{Gl}
\begin{document}

\title{Duality of Gau\ss-Manin systems associated to linear free divisors}
\author{Christian Sevenheck}

\maketitle

\begin{abstract}
We investigate differential systems occurring in the
study of particular non-isolated singularities, the so-called
linear free divisors. We obtain a duality theorem for these
$\cD$-modules taking into account filtrations, and deduce
degeneration properties of certain Frobenius manifolds associated to
linear sections of the Milnor fibres of the divisor.
\end{abstract}
\renewcommand{\thefootnote}{}
\footnote{
2000 \emph{Mathematics Subject Classification.}
32S40, 34M35.\\
Keywords: Frobenius manifold, linear free divisors, spectral numbers,
Brieskorn lattice, Birkhoff problem.\\
This research is supported by a DFG Heisenberg fellowship (Se 1114/2-1) and by ANR grant ANR-08-BLAN-0317-01 (SEDIGA).
}

\section{Introduction}
\label{sec:Introduction}

The aim of the present note is to study the duality theory of some
particular differential systems, which were introduced in \cite{dGMS}.
These are Gau\ss-Manin systems associated to hyperplane sections of
Milnor fibres of special non-isolated singularities, called linear
free divisors. These Gau\ss-Manin systems are used in loc.cit. for the construction of families of
Frobenius manifolds (following the general framework from \cite{DS}), which generalizes the Frobenius structure defined by
the quantum cohomology of the projective space. A natural question one may ask in this context
is whether they also arise as the quantum cohomology of some variety
(or orbifold). A particular and interesting class of examples
of linear free divisors are discriminants in representation spaces of a quiver $Q$ (see, e.g.,
\cite{BM} and \cite{GMNS}).
In that case, one may also ask about the relationship between the
Frobenius structures constructed in loc.cit. and the conjectured
Frobenius manifold structure on the space of stability conditions in $D^b(mod\,kQ)$ (see ,e.g.,  \cite{BridgelandSurvey} or \cite{Takahashi1}).
In any case, a precise study of the various properties
of linear free divisors and of the Frobenius manifolds from \cite{dGMS}
is of interest in both singularity theory and representation theory.
Particularly important is a detailed understanding
of the degeneration behavior at the limit point of the parameter space
(corresponding to the hyperplane section of $D$ itself, and generalizing the
large radius limit of the quantum cohomology of $\dP^n$).
Some questions on the limit behavior of the Frobenius manifolds constructed in \cite{dGMS}
remained open in that paper because of a lack of understanding of the duality theory
of the Gau\ss-Manin systems associated to the above mentioned hyperplane sections. In this paper,
we prove a conjecture from loc.cit. and give some consequences on this degeneration behavior.
The basic tool for this proof is an explicit description
of the Gau\ss-Manin system by differential operators, for which the duality statements
needed can be calculated directly.

Let us notice that the mirror of the quantum cohomology of the projective spaces can also be generalized
by the mirror Landau-Ginzburg model of a weighted projective space, in that case, one can similarly study
the associated filtered Gau\ss-Manin systems, and due to the more explicit control of the relevant cohomological invariants
(the spectral numbers, see proposition \ref{prop:GoodBasis} below), the corresponding statements have been shown in \cite{DM}, following
general results on a purely algebraic construction of Frobenius manifolds in \cite{Dou2}. The main
point in this note is to obtain these results for the Gau\ss-Manin systems of linear sections of Milnor fibres of linear free divisors,
where due to the more complicated combinatorial structure of the input data (like quiver representations)
the distribution of the spectral numbers is less easy to control.


\textbf{Acknowledgements: } I would like to thank Thomas Reichelt and
John Alexander Cruz Morales for interesting discussions on topics related to this article.

\section{Linear free divisors, hyperplane sections and Gau\ss-Manin systems}
\label{sec:LFDandCompact}

We start by introducing the main objects of interest of this paper.
We also give a short but self-contained account of the
results from \cite{dGMS} needed here. More details can be found in loc.cit.
and in \cite{Sev1}.
\begin{deflemma}
\label{def:LFD}
Write $V$ for the affine space $\dC^n$ with coordinates $x_1,\ldots,x_n$. Let $D\subset V$ be a reduced hypersurface,
given by a polynomial equation $h\in\cO_V$.
\begin{enumerate}
\item
$D$ is called a linear free divisor iff there is a basis $\vartheta_1,\ldots,\vartheta_n$ of
the $\cO_V$-module $\Theta(-\log\,D):=\{\vartheta\in\Theta_V\,|\,\vartheta(h)\subset(h)\}$
(in particular, $\Theta(-\log\,D)$ must be $\cO_V$-free) such that
$\vartheta_i = \sum_{i=1}^n a_{ij}\partial_{x_j}$ where  $a_{ij} \in \dC[V]_1$ is a linear form.
We will also consider the submodule $\Theta(-\log\,h)=\{\vartheta\in\Theta(-\log\,D)\,|\,\vartheta(h)=0\}$.
$\Theta(-\log\,h)$ is $\cO_V$-free of rank $n-1$ and we have
$\Theta(-\log\,D) = \Theta(-\log\,h)\oplus\cO_V\cdot E$, where $E=\sum_{i=1}^n x_i\partial_{x_i}$.
\item
$D$ is called reductive, if the identity component $G$ of the algebraic group
$G_D:=\{g\in\Gl(V)\,|\,g(D)\subset D\}$ is so.
\item
For any reductive linear free divisor $D$, the dual action of $G$ on $V^\vee$
has an open orbit, with complement a reduced hypersurface $D^\vee$. Any linear
form $f$ in the open orbit $V^\vee\backslash D^\vee$ is called generic.
\end{enumerate}
\end{deflemma}
We are interested in Gau\ss-Manin systems of the restriction of generic linear forms
to the Milnor fibres of $D$, this leads to consider relative de Rham complexes with
twisted differentials.
\begin{deflemma}
Let $D\subset V$ be a reductive linear free divisor with defining equation $h$, seen as a morphism
$h:V\rightarrow T=\Spec\dC[t]$. Moreover, let $f\in V^\vee\backslash D^\vee$ be
a generic linear form, which we see as a morphism $f:V\rightarrow S=\Spec\dC[s]$.

Put
$$
\begin{array}{c}
G(\log\,D):=\dH^n(\Omega^\bullet_{V/T}(\log\,D)[\theta,\theta^{-1}],\theta d - df\wedge)
\quad;\quad
G(*D):=\dH^{n-1}(\Omega^\bullet_{V/T}(*D)[\theta,\theta^{-1}],\theta d - df\wedge)
\\ \\
G_0(\log\,D):=\dH^n(\Omega^\bullet_{V/T}(\log\,D)[\theta],\theta d - df\wedge)
\quad;\quad
G_0(*D):=\dH^{n-1}(\Omega^\bullet_{V/T}(*D)[\theta],\theta d - df\wedge)
\end{array}
$$
where
$\Omega^\bullet_{V/T}(*D) :=\Omega^\bullet_V(*D)/h^*\Omega^1_T(*\{0\})\wedge\Omega^{\bullet-1}_V(*D)$
resp.
$\Omega^\bullet_{V/T}(\log\,D) :=\Omega^\bullet_V(\log\,D)/h^*\Omega^1_T(\log\,\{0\})\wedge\Omega^{\bullet-1}_V(\log\,D)$
is the localization along $D$ resp. its logarithmic extension over $D$
of the relative de Rham complex of $h$.
Then $G(*D)$ is $\dC[\theta,\theta^{-1},t,t^{-1}]$-free of rank $n$ and
$G(\log\,D)$ (resp. $G_0(*D), G_0(\log\, D)$) is a
$\dC[\theta,\theta^{-1},t]$- (resp. $\dC[\theta,t,t^{-1}]$-,
$\dC[\theta,t]$-) lattice inside $G(*D)$.
There is a connection operator $$ \nabla:G_0(\log\,D) \longrightarrow G_0(\log\,D) \otimes z^{-1}\Omega^1_{\dC\times T}\left(\log((\{0\}\times T)\cup(\dC\times\{0\}))\right) $$ which induces connections on $G(\log\, D)$, $G_0(*D)$ and $G(*D$).
\end{deflemma}

We have constructed in \cite{dGMS} two particular bases of $G_0(*D)$ in which the connection can be expressed
in a very simple way. This is summarized in the following proposition.
\begin{proposition}\label{prop:GoodBasis}
\begin{enumerate}
\item
There is a $\dC[\theta,t,t^{-1}]$-basis $\underline{\omega}=(\omega_1,\ldots,\omega_n)$ (called $\underline{\omega}^{(2)}$ in \cite[corollary 4.12]{dGMS}) of $G_0(*D)$ such
that
\begin{equation}\label{eq:Basis}
\nabla(\underline{\omega}) = \underline{\omega} \cdot
\left[
(A_0\frac{1}{\theta}+A_\infty)\frac{d\theta}{\theta}+(-A_0\frac{1}{\theta}+A'_\infty)\frac{dt}{nt}
\right]
\end{equation}
where
$$
A_0:=
  \begin{pmatrix}
    0 & 0&\ldots &0& c\cdot t\\
    -1   &0&\ldots&0& 0\\
    \vdots & \vdots & \ddots & \vdots &\vdots \\
    0 &0 & \ldots &0& 0\\
    0&0& \ldots& -1&0
  \end{pmatrix},
$$
$c\in\dC^*$, $A_\infty=\diag(\nu_1,\ldots,\nu_n)$ and
$A'_\infty:=\diag(0,1,\ldots,n-1)-A_\infty$. The numbers $\nu_i$ have the following two properties:
\begin{enumerate}
\item
For all $i\in\{1,\ldots,n-1\}$, we have $\nu_{i+1}-\nu_i\leq 1$.
\item
Let $\sigma\in S_n$ be a permutation
such that $\nu_{\sigma(1)} \leq \ldots \leq \nu_{\sigma(n)}$.
Then we have the symmetry $\nu_{\sigma(i)}+\nu_{\sigma(n+1-i)}=n-1$.
\end{enumerate}
We have $G_0(\log\,D)=\oplus_{i=1}^n \dC[t,\theta]\omega_i$ and the basis $\underline{\omega}$ is a $V^+$-solution
to the Birkhoff problem (see \cite[appendix B.d]{DS}) of the module $(G_0(\log\,D)/t\cdot G_0(\log\,D),\nabla)$,
which was called logarithmic Brieskorn lattice in \cite{Sev1}.
\item
There is another basis $\widetilde{\underline{\omega}}$ of $G_0(*D)$
(called $\underline{\omega}^{(3)}$ in \cite[corollary 4.12]{dGMS}) for which the connection matrix takes the
same form as for $\underline{\omega}$, that is
$$
\nabla(\widetilde{\underline{\omega}}) = \widetilde{\underline{\omega}} \cdot
\left[
(A_0\frac{1}{\theta}+\widetilde{A}_\infty)\frac{d\theta}{\theta}+(-A_0\frac{1}{\theta}+\widetilde{A}'_\infty)\frac{dt}{nt}
\right]
$$
where $\widetilde{A}_\infty=\diag(\widetilde{\nu}_1,\ldots,\widetilde{\nu}_n)$ and $\widetilde{A}'_\infty=\diag(0,1,\ldots,n-1)-\widetilde{A}_\infty$.
Here the numbers $\widetilde{\nu}_i$ have the same properties as the numbers $\nu_i$ above and satisfy additionally
$\widetilde{\nu}_1-\widetilde{\nu}_n\leq 1$, moreover, we have $\underline{\omega}=\widetilde{\underline{\omega}}$ if $\nu_1-\nu_n\leq 1$. $\widetilde{\underline{\omega}}$ is a $V^+$-solution to the Birkhoff problem in a family for $(G_0(*D),\nabla)$.
\end{enumerate}
\end{proposition}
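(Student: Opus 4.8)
The plan is to reconstruct the two bases from an explicit presentation of $G_0(*D)$, the computational heart of which is carried out in \cite[corollary 4.12]{dGMS}. First I would use the $\cO_V$-basis $\vartheta_1,\ldots,\vartheta_n$ of $\Theta(-\log\,D)$ to rewrite the twisted relative de Rham complex $(\Omega^\bullet_{V/T}(*D)[\theta],\theta d-df\wedge)$ as a Koszul-type complex, contracting forms against the $\vartheta_i$ and splitting off the Euler field $E$ via $\Theta(-\log\,D)=\Theta(-\log\,h)\oplus\cO_V\cdot E$. Since $D$ is reductive, the dual action has an open orbit and the generic form $f$ has controlled critical behaviour along the fibres $h=t$; one reads off that $G_0(*D)$ is $\dC[\theta,t,t^{-1}]$-free of rank $n$, with generators the classes attached to a flag of the $\vartheta_i$. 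The parameter $t$ entering $A_0$ is then explained by homogeneity: $h$ has $E$-degree $n$, so $E(h)=nh$ and $t\partial_t$ acts as $E/n$ (this is the source of the factor $n$ in $\tfrac{dt}{nt}$), while $f$ has $E$-degree $1$, so multiplication by $f$ rescales with $t$ along the Milnor fibre.

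Next, to obtain the companion shape of $A_0$ I would exhibit a cyclic vector $\omega_1$ for the leading ($\theta^{-1}$) part of $\nabla_{\theta\partial_\theta}$ and generate $\omega_2,\ldots,\omega_n$ by applying (the leading part of) $-\theta^2\nabla_{\partial_\theta}$ repeatedly; the cyclic relation then closes, up to the scalar $c\cdot t$ at the $n$-th step, into exactly the displayed $A_0$. The coupling between the $d\theta$- and $dt$-directions is forced by flatness together with the $\dC^*$-homogeneity: reorganizing, the full matrix equals $\left(\tfrac{A_0}{\theta}+A_\infty\right)\left(\tfrac{d\theta}{\theta}-\tfrac{dt}{nt}\right)+\diag(0,\ldots,n-1)\tfrac{dt}{nt}$, so that the entire $\theta$- and $t$-dependence is governed by the single closed homogeneity $1$-form $\tfrac{d\theta}{\theta}-\tfrac{dt}{nt}$, with the residual grading $\diag(0,\ldots,n-1)$ recording the $E$-weights $0,1,\ldots,n-1$ of the cyclic basis. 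This both pins down the $dt$-part once the $d\theta$-part is known and yields $A'_\infty=\diag(0,\ldots,n-1)-A_\infty$.

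The substantive step is the Birkhoff problem, i.e. extending the lattice across $\theta=\infty$ with a \emph{diagonalizable} residue $A_\infty=\diag(\nu_1,\ldots,\nu_n)$ and establishing $G_0(\log\,D)=\oplus_i\dC[t,\theta]\omega_i$. Here I would invoke the $V$-filtration of $G(*D)$ at $\theta=\infty$: a $V^+$-solution is a basis adapted to this filtration whose residues $\nu_i$ are the associated exponents, and its existence is the general statement recalled in \cite[appendix B.d]{DS}. Property (a), $\nu_{i+1}-\nu_i\leq 1$, should follow from the structure of the associated graded (consecutive jumps of the $V$-filtration do not exceed one), while property (b), the symmetry $\nu_{\sigma(i)}+\nu_{\sigma(n+1-i)}=n-1$, is the shadow of the self-duality of the Gau\ss-Manin system established in \cite{dGMS}: a nondegenerate pairing exchanges the lattice with its dual and sends the residue $A_\infty$ to $(n-1)\cdot\id-A_\infty$ up to permutation, exactly as for the spectral numbers $0,1,\ldots,n-1$ of $\dP^{n-1}$.

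Finally, for the second basis $\widetilde{\underline{\omega}}$ I would normalize the $V^+$-solution. The residues are determined only up to the integer shifts permitted by the $V$-filtration, and elementary $\theta$-degree-raising transformations preserving the companion form of $A_0$ can push the eigenvalues into a window of width one, enforcing $\widetilde{\nu}_1-\widetilde{\nu}_n\leq 1$; when $\nu_1-\nu_n\leq 1$ no such shift is available and the two bases coincide, as claimed. The step I expect to be the main obstacle is carrying this out \emph{in a family} over $\dC[\theta,t,t^{-1}]$ rather than fibrewise: one must verify that the required shift is independent of $t$ and that the transformed basis stays $\dC[t,\theta]$-free on $G_0(\log\,D)$, so that $\widetilde{\underline{\omega}}$ is genuinely a $V^+$-solution in a family. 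As the introduction stresses, controlling the distribution of the $\nu_i$ — hence the width of the available window — is the delicate combinatorial point, since it is dictated by the representation-theoretic data defining $D$.
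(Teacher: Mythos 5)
First, a structural remark: the paper gives no proof of this proposition at all — it is explicitly a summary of \cite[corollary 4.12]{dGMS} (together with \cite{Sev1}), so your reconstruction has to be judged against the logic of the present paper and of loc.cit. Your first steps are broadly consistent with that source: the Koszul-type rewriting of the twisted logarithmic de Rham complex via the basis $\vartheta_1,\ldots,\vartheta_n$ of $\Theta(-\log D)$ and the splitting off of $E$, the cyclic generation of $\omega_2,\ldots,\omega_n$, and the observation that the whole matrix reorganizes as $(A_0\theta^{-1}+A_\infty)\left(\tfrac{d\theta}{\theta}-\tfrac{dt}{nt}\right)+\diag(0,\ldots,n-1)\tfrac{dt}{nt}$ (which is a correct identity and does reflect the $\dC^*$-homogeneity) are all in the right spirit.

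The genuine gap is your derivation of the symmetry (b). You propose to obtain $\nu_{\sigma(i)}+\nu_{\sigma(n+1-i)}=n-1$ as ``the shadow of the self-duality of the Gau\ss-Manin system established in \cite{dGMS}''. Within this paper that is exactly backwards and would be circular: the self-duality of $\cG\cong G(*D)$ is the content of proposition \ref{prop:HolDual-Alg} and theorem \ref{theo:Pairing}, which are the main new results here, and their proofs \emph{use} the symmetry of the $\nu_i$ as an input — it is precisely this symmetry that identifies $\widetilde{P}_1=P_1^t$ with $\prod_{i=1}^n(-\theta)\bigl(t\partial_t-\tfrac{i-1-\nu_i}{n}+1\bigr)-\tfrac{c}{n^n}t$ and hence makes $\cG$ isomorphic to $\iota^*\bD\cG$. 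The symmetry itself is not obtained from a duality of $G(*D)$: it is imported from \cite[corollary 17]{Sev1}, where it is deduced from the symmetry of the roots of the Bernstein polynomial $b_h$ proved in \cite{GS} — a statement about the divisor $h$, not about the family of hyperplane sections. (Indeed, the compatibility of the pairing with the lattices and the grading was still conjectural in \cite{dGMS}; settling it is the point of theorems \ref{theo:PairingLattice} and \ref{theo:VSSolution}.) To make your route work you would need an independent construction of the pairing not resting on the explicit $\nu_i$, together with its compatibility with $G_0(*D)$ in the family — which is the hard part this paper is devoted to. A smaller issue: property (a) concerns consecutive differences in the \emph{given} (unsorted) indexing of the cyclic basis, so it is a feature of the explicit construction in \cite{dGMS} rather than a general bound on jumps of the $V$-filtration as you suggest.
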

The following theorem is a rather easy consequence of this result, but it will be very useful when studying the duality theory
of $G(*D)$.
\begin{theorem}
\label{theo:GMSystemCyclic}
Write $\cD=\dC[\theta,\theta^{-1},t,t^{-1}]\langle \partial_t, \partial_\theta\rangle$. Then there
is an isomorphism of left $\cD$-modules
\begin{equation}\label{eq:Ident-G}
\begin{array}{rcl}
\varphi:\cG:=\cD/\cD\cdot (P_1,P_2) & \longrightarrow & G(*D) \\ \\
1 & \longmapsto & \left[ n\vol/dh\right],
\end{array}
\end{equation}
where
$$
P_1  :=  \prod_{i=1}^n \theta\left(t\partial_t-\frac{i-1-\nu_i}{n}\right) - \frac{c}{n^n}\cdot t\\
\quad;\quad
P_2  :=  \theta^2\partial_\theta + n \cdot t\theta\partial_t
$$
and where $\vol=dx_1\wedge\ldots\wedge dx_n$.
The inverse image of the lattice $G_0(*D)$ under $\varphi$ can be described as the subring $\cG_0$ of $\cD/\cD\cdot (P_1,P_2)$ defined as
$$
\cG_0:=\dC[\theta,t,t^{-1}]\langle \theta\partial_t, \theta^2\partial_\theta \rangle \left/\dC[\theta,t,t^{-1}]\langle \theta\partial_t, \theta^2\partial_\theta \rangle\cdot (P_1, P_2).\right.
$$
Here $\dC[\theta,t,t^{-1}]\langle \theta\partial_t, \theta^2\partial_\theta \rangle$ is
the $\dC[\theta,t,t^{-1}]$-subalgebra of $\cD$ generated by $\theta\partial_t$ and $\theta^2\partial_\theta$.

\end{theorem}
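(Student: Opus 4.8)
The plan is to realise $G(*D)$ as a cyclic $\cD$-module generated by the first vector $\omega_1$ of the good basis of Proposition~\ref{prop:GoodBasis}, and to read off the two relations $P_1,P_2$ directly from the connection matrix of \eqref{eq:Basis}. First I would record that the class $[n\vol/dh]$ equals $\omega_1$ (this is how $\underline{\omega}=\underline{\omega}^{(2)}$ is normalised in \cite{dGMS}), and rewrite \eqref{eq:Basis} as the explicit first-order system $\nabla_{\theta\partial_\theta}\omega_j = \tfrac1\theta(\underline{\omega}A_0)_j + \nu_j\omega_j$ and $\nabla_{nt\partial_t}\omega_j = -\tfrac1\theta(\underline{\omega}A_0)_j + (j-1-\nu_j)\omega_j$. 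Since $A_0$ is of companion type, one has $\underline{\omega}A_0 = (-\omega_2,\dots,-\omega_n,\,c\,t\,\omega_1)$, so these relations collapse into a single raising recursion together with one closing relation coming from the corner entry $c\cdot t$.

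Next I would establish well-definedness and surjectivity of $\varphi$. The $\nabla_{nt\partial_t}$-relation for $j<n$ gives $\omega_{j+1} = \theta\,(nt\partial_t-(j-1-\nu_j))\,\omega_j$; writing $B_i:=\theta(nt\partial_t-(i-1-\nu_i))$, each $B_i$ is a power of $\theta$ times a polynomial in the single operator $t\partial_t$, so the $B_i$ commute and $\omega_j = B_{j-1}\cdots B_1\,\omega_1$. In particular $\omega_1$ generates $G(*D)$ over $\cD$, hence $\varphi$ is surjective. The $j=n$ relation now involves the corner entry and closes the cycle into $\prod_i B_i\,\omega_1 = -\,c\,t\,\omega_1$; together with $\prod_i B_i = n^n\prod_i\theta(t\partial_t-\tfrac{i-1-\nu_i}{n})$ this is exactly the relation $P_1\,\omega_1=0$ (up to the sign and normalisation conventions fixed in Proposition~\ref{prop:GoodBasis}). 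Finally, adding the $\theta\partial_\theta$- and $nt\partial_t$-relations for $j=1$ yields $(\nabla_{\theta\partial_\theta}+\nabla_{nt\partial_t})\omega_1=0$, i.e. $P_2\,\omega_1 = \theta(\nabla_{\theta\partial_\theta}+\nabla_{nt\partial_t})\omega_1 = 0$. Thus $\varphi$ is a well-defined $\cD$-linear surjection.

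It remains to prove injectivity, which I expect to be the only real difficulty. Writing $R:=\dC[\theta,\theta^{-1},t,t^{-1}]$, the operator $P_2$ has leading term $\theta^2\partial_\theta$ with coefficient invertible in $R$, so modulo $\cD\cdot(P_1,P_2)$ every $\partial_\theta$ can be traded for $\partial_t$; and $P_1$, viewed in $\partial_t$, has leading term $\theta^n t^n\partial_t^n$ with invertible coefficient, so it lowers the $\partial_t$-order below $n$. Consequently $\cG$ is generated over $R$ by the classes of $1,\partial_t,\dots,\partial_t^{n-1}$, so $\rk_R\cG\le n$. Since $G(*D)$ is $R$-free of rank $n$ and $\varphi$ is surjective, the composite $R^n\twoheadrightarrow\cG\xrightarrow{\varphi}G(*D)\cong R^n$ is a surjective endomorphism of a finitely generated module over the commutative noetherian ring $R$, hence an isomorphism; this forces both maps, and in particular $\varphi$, to be isomorphisms. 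The step that needs genuine care is to make the two reductions simultaneously terminating and confluent, which I would handle by choosing a filtration on $\cD$ for which the principal symbols of $P_1$ and $P_2$ form a regular sequence in the (commutative) associated graded ring, guaranteeing that the spanning set $1,\partial_t,\dots,\partial_t^{n-1}$ is honest and that no hidden relation enlarges the rank.

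For the lattice statement I would observe that $B_i = n\,t\cdot(\theta\partial_t) - (i-1-\nu_i)\,\theta$ lies in the subalgebra $\dC[\theta,t,t^{-1}]\langle\theta\partial_t,\theta^2\partial_\theta\rangle$, so $\omega_j = B_{j-1}\cdots B_1\,\omega_1$ shows that $G_0(*D)=\oplus_i\dC[\theta,t,t^{-1}]\omega_i$ is contained in the image of $\cG_0$. Conversely, the first-order relations above show directly that $\theta\partial_t$ and $\theta^2\partial_\theta$ carry $\oplus_i\dC[\theta,t,t^{-1}]\omega_i$ into itself (all coefficients appearing lie in $\dC[\theta,t,t^{-1}]$, after absorbing the available $t^{-1}$), so this lattice is stable under $\cG_0$ and is generated over it by $\omega_1$. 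Hence $\varphi^{-1}(G_0(*D))=\cG_0$, as claimed.
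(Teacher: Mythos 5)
Your proposal is correct and its first half follows essentially the paper's route: the paper also eliminates $\partial_\theta$ via the observation $\theta^{-2}P_2=\partial_\theta+n\theta^{-1}t\partial_t$ and reduces to the cyclic $\dC[\theta,\theta^{-1},t,t^{-1}]\langle\partial_t\rangle$-module $\cD'/\cD'P_1$, reading the raising recursion and the closing relation off the companion-type matrix $A_0$. Where you genuinely differ is in the two delicate steps. For injectivity, the paper exhibits the explicit basis $Q_i=\prod_{j\leq i}\theta(t\partial_t-\tfrac{j-1-\nu_j}{n})$, $i=0,\dots,n-1$, and matches it term by term with $n^{-i}\omega_{i+1}$; you instead produce only a spanning set $1,\partial_t,\dots,\partial_t^{n-1}$ and conclude by the surjective-endomorphism (Cayley--Hamilton/Nakayama) trick applied to $R^n\twoheadrightarrow\cG\to G(*D)\cong R^n$ with $R=\dC[\theta,\theta^{-1},t,t^{-1}]$. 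This is valid, and in fact your closing worry about confluence is superfluous: termination of the two reductions already gives the spanning set, and the rank argument does the rest, so no regular-sequence or filtration argument is needed here (the paper only needs that later, in Proposition \ref{prop:HolDual-Alg}). For the lattice statement the divergence is more substantial: the paper reduces modulo $\theta$, computes $\cG_0/\theta\cG_0\cong\dC[t,t^{-1},\mu]/((t\mu)^n-ct/n^n)$ and invokes the identification with the relative Jacobian algebra $\cT^1_{\cR_h/T}(f)$ from \cite[proposition 3.5]{dGMS}, whereas you argue directly that $\oplus_i\dC[\theta,t,t^{-1}]\omega_i$ is stable under $\theta\partial_t$ and $\theta^2\partial_\theta$ and is generated from $\omega_1$ by the operators $B_i$, which lie in the subalgebra; this is more elementary and self-contained, at the cost of not producing the mod-$\theta$ description that the paper reuses. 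One caveat you were right to flag: with the signs exactly as printed in Proposition \ref{prop:GoodBasis} one gets $B_n\omega_n=-ct\,\omega_1$ and hence $P_1\omega_1=-2cn^{-n}t\,\omega_1$, so the sign of the corner entry of $A_0$ (or of the term $\tfrac{c}{n^n}t$ in $P_1$) must be adjusted; the paper's own proof passes over this silently, so it is a shared convention issue rather than a gap in your argument.
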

\begin{proof}
By forgetting the $\partial_\theta$-action, we can see $G(*D)$ as a
$\dC[\theta,\theta^{-1},t,t^{-1}]\langle \partial_t \rangle$-module only. The first step
is then to show that there is an isomorphism of $\dC[\theta,\theta^{-1},t,t^{-1}]\langle \partial_t \rangle$-modules
$$
\begin{array}{rcl}
\widetilde{\varphi}:\widetilde{\cG}:=\dC[\theta,\theta^{-1},t,t^{-1}]\langle \partial_t \rangle \left/\dC[\theta,\theta^{-1},t,t^{-1}]\langle \partial_t \rangle \cdot P_1\right. & \longrightarrow & G(*D) \\ \\
1 & \longmapsto & \left[ n\vol/dh\right],
\end{array}
$$
From the form of the operator $P_1$ we see that the left hand side is generated over $\dC[\theta,\theta^{-1},t,t^{-1}]$ by
the elements $\left(Q_i:=\prod_{j=1}^i \theta(t\partial_t-\frac{j-1-\nu_j}{n})\right)_{i=0,\ldots,n-1}$, where we put by definition $Q_0:=1$.
Hence the operator $\theta t\partial_t-\frac{i-1-\nu_i}{n}$ sends $Q_{i-1}$ to $Q_i$, so that by
setting  $\widetilde{\varphi}(Q_i):=n^{-i}\omega_{i+1}$, we obtain a $\dC[\theta,\theta^{-1},t,t^{-1}]$-linear map which is compatible
with the action of $\partial_t$ on the left hand side and $\nabla_{\partial_t}$ on the right hand side.
Due to the particular form of the operator $P_2$ (more precisely, due to the fact that $\theta^{-2}\cdot P_2
=\partial_\theta+n\theta^{-1}t\partial_t\in(P_1,P_2)$), the module $\cG$ is isomorphic to $\widetilde{\cG}$ when seen as a $\dC[\theta,\theta^{-1},t,t^{-1}]\langle \partial_t \rangle$-module only. Hence in order to finish the proof of the first statement, we have to check that the action of $\partial_\theta$ on $\cG$
coincides with the action of $\nabla_{\partial_\theta}$ on $G(*D)$, which is clear from formula \eqref{eq:Basis},
and by noticing that
$$
\begin{array}{rcl}
(\theta^2\partial_\theta -\theta\nu_{i-1}) \cdot Q_{i-1} &= &
(i-1) \theta^i \cdot R_{i-1} - \theta^{i-1}\cdot R_{i-1}\cdot n\theta t\partial_t-\theta^i \nu_i\cdot R_{i-1} \\
\theta^i\cdot \left(    -nt\partial_t+i-1-\nu_i\right)\cdot R_{i-1} & = & -n \cdot Q_i,
\end{array}
$$
where we write $R_i=\prod_{j=1}^i (t\partial_t-\frac{j-1-\nu_j}{n})$.

Looking at the connection matrix \eqref{eq:Basis}, one immediately sees that
$\varphi(\cG_0)\subset G_0(*D)$. In order to show equality, take any section $[\omega]\in G_0(*D)$ with representative
$\omega=\sum_{k\geq 0} \theta^k \omega^{(k)}$, where $\omega^{(k)}\in\Omega^{n-1}_{V/T}(*D)$. There is an (uniquely determined)
operator $P$ in $\cG$ with $\varphi(P)=[\omega]$, and we have to show that $P\in\cG_0$. By linearity of $\varphi$,
it is sufficient to do it for the case where $\omega^{(0)}\neq 0$, and then there is a minimal $k\in\dN$ with
$\theta^k\cdot P\in\cG_0$, and then the class of $\theta^k\cdot P$ in $\cG_0/\theta\cG_0$ is non-zero.
On the other hand, $\theta^k\varphi(P)=\theta^k\omega\in G_0(*D)$, and the class of $\theta^k\omega$ is zero
in $G_0(*D)/\theta \cdot G_0(*D)$ unless $k=0$. Hence the statement follows once we know that
the induced morphism
$$
[\widetilde{\varphi}]: \cG_0/\theta \cG_0 \longrightarrow G_0(*D)/\theta G_0(*D)
$$
is an isomorphism.
Now recall from \cite[section 3.2]{dGMS} that the relative deformation or Jacobian
algebra is defined as
$$
\cT^1_{\cR_h/T}(f):= \frac{\cO_V}{df(\Theta(-\log\,h))}.
$$
Then we have $G_0(*D)/\theta G_0(*D) \cong H^0(V,\cT^1_{\cR_h/T}(f))\cdot \vol$.
On the other hand, $\cG_0/\theta \cG_0 \cong
\widetilde{\cG}_0/\theta \widetilde{\cG}_0 = \dC[t,t^{-1},\mu]/((t\mu)^n-c/n^n\cdot t)$, where
we denote by
$\mu$ the class of $[\theta\partial_t]$ in $\widetilde{\cG}_0/\theta \widetilde{\cG}_0$.
Then the isomorphism $\dC[t,t^{-1},\mu]/((t\mu)^n-c/n^n\cdot t)\cong H^0(V,\cT^1_{\cR_h/T}(f))$ follows
from \cite[proposition 3.5]{dGMS}.
%
%
\end{proof}

\section{Duality theorems}
\label{sec:Duality}

In this sections we derive the existence of a pairing
on the meromorphic bundle $G(*D)$. For that purpose, we compute
the holonomic dual of the module $\cG$. We
show that it is self-dual, by exhibiting a $\cD$-free
resolution of it. Using theorem \ref{theo:GMSystemCyclic} and
a comparison result between the meromorphic and the holonomic dual module
of a meromorphic bundle, this yields the pairing mentioned above.
%
We also show that it is compatible with the lattice $G_0(*D)$, this fact
is used later for the construction of the flat metric on the Frobenius manifold associated to the pair $(f,h)$.

\begin{proposition}
\label{prop:HolDual-Alg}
\begin{enumerate}
\item
Let $\cG^r:=\cD/(P^t_1,-P_2^t)\cdot\cD$ be the right module associated to $\cG$, here
$$
P^t_1  :=  \prod_{i=1}^n (-\theta)\left(t\partial_t+\frac{i-1-\nu_i}{n}+1\right) - \frac{c}{n^n}\cdot t\\
\quad;\quad
P^t_2  :=  -\left(\theta^2\partial_\theta + n \cdot t\theta\partial_t + \theta(n+2)\right).
$$
are the usual transforms of $P_1$ and $P_2$.
Then $\cG^r$ has a the following explicit resolution by free right $\cD$-modules
\begin{equation}
\label{eq:Resolution}
\xymatrix@!0{
0 \ar[rr] && \cD \ar[rrr]^{\begin{pmatrix}\SC P^t_2 - n\theta \\ \SC P^t_1\end{pmatrix}\cdot} &&& \cD^2 \ar[rrr]^{\begin{pmatrix}\SC P^t_1 &\SC -P^t_2\end{pmatrix}\cdot} &&& \cD \ar[rrr] &&& \cG^r \ar[rr] &&0
}
\end{equation}
\item
We have $\bD\cG=\cD/\cD\cdot(\widetilde{P}_1,\widetilde{P}_2)$ where
$\widetilde{P}_1=P^t_1$ and $\widetilde{P}_2:=-P^t_2+n\theta=\theta^2\partial_\theta+nt\theta\partial_t+\theta(2n+2)$.
Moreover, there is an isomorphism $\Phi:\cG\rightarrow \iota^*\bD\cG$ of left $\cD$-modules, induced by
$\Phi(m)= m\cdot\theta^{n+2} \cdot t$ for any $m\in \cG$, where $\iota$ is the involution sending $z$ to $-z$.
\end{enumerate}
\end{proposition}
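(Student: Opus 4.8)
The plan is to read off $\bD\cG$ by dualizing the free resolution \eqref{eq:Resolution} of part (1), and then to realize the self-duality isomorphism as right multiplication by the unit $\theta^{n+2}t$; the only non-formal ingredient will be the symmetry of the spectral numbers $\nu_i$. For the first assertion I would use that $\cD$ is the ring of differential operators on the $2$-dimensional torus $\Spec\dC[\theta,\theta^{-1},t,t^{-1}]$, so that for the holonomic module $\cG$ the duality functor is concentrated in cohomological degree $2$. Since \eqref{eq:Resolution} resolves the right module $\cG^r$ associated to $\cG$, applying $\Hom_\cD(-,\cD)$ turns it into a complex of free left modules computing $\mathrm{Ext}^\bullet_\cD(\cG^r,\cD)$; by holonomicity only $\mathrm{Ext}^2$ survives, and side-changing identifies this left module with $\bD\cG$. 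Concretely, $\bD\cG$ is the cokernel of the transpose of the left-most differential of \eqref{eq:Resolution}, i.e. the left module on one generator modulo the left ideal spanned by the two entries $P^t_1$ and $P^t_2-n\theta$ of that differential. As $P^t_2-n\theta=-(\theta^2\partial_\theta+nt\theta\partial_t+\theta(2n+2))=-\widetilde P_2$ and $P^t_1=\widetilde P_1$, this is exactly $\cD/\cD\cdot(\widetilde P_1,\widetilde P_2)$.

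For the isomorphism set $u:=\theta^{n+2}t$, a unit of $\cD$, and let $\Phi$ be induced by the right multiplication $R_u\colon P\mapsto Pu$. Being right multiplication, $R_u$ is left $\cD$-linear and bijective on $\cD$; it therefore descends to a left $\cD$-linear map $\cG=\cD/\cD(P_1,P_2)\to\iota^*\bD\cG=\cD/\cD(\iota^*\widetilde P_1,\iota^*\widetilde P_2)$ as soon as $\cD(P_1,P_2)\,u\subseteq\cD(\iota^*\widetilde P_1,\iota^*\widetilde P_2)$, and it is an isomorphism once this inclusion is an equality. Here $\iota^*\widetilde P_2=-\widetilde P_2$ and $\iota^*\widetilde P_1=\theta^n\prod_{i=1}^n\!\big(t\partial_t+1+\tfrac{i-1-\nu_i}{n}\big)-\tfrac{c}{n^n}t$, so everything reduces to the two operator identities $P_1u\in\cD\cdot\iota^*\widetilde P_1$ and $P_2u\in\cD\cdot\iota^*\widetilde P_2$.

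The identity for $P_2$ is a direct computation in the Weyl algebra: from $\partial_\theta\theta^{n+2}=\theta^{n+2}\partial_\theta+(n+2)\theta^{n+1}$ and $\partial_t t=t\partial_t+1$ one gets $P_2u=\theta^{n+2}t\,\widetilde P_2=-u\,\iota^*\widetilde P_2$. For $P_1$ I would first commute the trailing factor $t$ to the left through the product, using $(t\partial_t-\beta)t=t(t\partial_t+1-\beta)$ repeatedly, which yields
\[
P_1u=\theta^{2n+2}\,t\prod_{i=1}^n\!\Big(t\partial_t+1-\tfrac{i-1-\nu_i}{n}\Big)-\tfrac{c}{n^n}\theta^{n+2}t^2 .
\]
Comparing with
\[
u\,\iota^*\widetilde P_1=\theta^{2n+2}\,t\prod_{i=1}^n\!\Big(t\partial_t+1+\tfrac{i-1-\nu_i}{n}\Big)-\tfrac{c}{n^n}\theta^{n+2}t^2 ,
\]
the two coincide exactly when the multiset $\{\tfrac{i-1-\nu_i}{n}\}_{i}$ is symmetric about $0$. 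This is where the spectral symmetry enters: in the form $\nu_i+\nu_{n+1-i}=n-1$ it gives $\tfrac{i-1-\nu_i}{n}=-\tfrac{(n+1-i)-1-\nu_{n+1-i}}{n}$, so the linear factors are permuted into one another and the two products agree. Hence $P_1u=u\,\iota^*\widetilde P_1$, and together with the $P_2$ identity and $\cD u=\cD$ we obtain $\cD(P_1,P_2)\,u=\cD\cdot\iota^*\widetilde P_1+\cD\cdot\iota^*\widetilde P_2$, so that $\Phi$ is the asserted isomorphism.

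The main obstacle is precisely this $P_1$-identity. The naive comparison above leaves the remainder $\theta^{2n+2}t\,\Delta(t\partial_t)$ with $\Delta=\prod_i(t\partial_t+1-\tfrac{i-1-\nu_i}{n})-\prod_i(t\partial_t+1+\tfrac{i-1-\nu_i}{n})$ a polynomial of degree $<n$ in $t\partial_t$; since $\iota^*\bD\cG$ is free of rank $n$ over $\dC[\theta,\theta^{-1},t,t^{-1}]$ (being the dual of the meromorphic bundle $G(*D)$), this remainder is nonzero in the module unless $\Delta\equiv0$. Thus the whole statement rests on the symmetry of the spectrum, and I would take care to invoke property (b) of Proposition \ref{prop:GoodBasis}(1) in the pointwise form $\nu_i+\nu_{n+1-i}=n-1$ attached to the ordered basis $\underline{\omega}^{(2)}$, rather than merely the symmetry of the unordered multiset $\{\nu_i\}$; checking that this pointwise form genuinely holds for $\underline{\omega}^{(2)}$ is the one point where I would return to the construction in \cite{dGMS}.
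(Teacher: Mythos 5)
Your treatment of part (2) is essentially the paper's own argument: dualize the resolution, read off $\bD\cG$ as the cokernel of the transposed leftmost differential, and verify that right multiplication by the unit $\theta^{n+2}t$ is well defined by the two operator identities $P_2\cdot\theta^{n+2}t=-\theta^{n+2}t\cdot\iota^*\widetilde P_2$ and $P_1\cdot\theta^{n+2}t=\theta^{n+2}t\cdot\iota^*\widetilde P_1$, the latter resting on the symmetry of the exponents. Those computations are correct. On the symmetry: what is actually needed (and what you correctly isolate) is that the \emph{multiset} $\{\tfrac{i-1-\nu_i}{n}\}_i$ is stable under negation; this is not a consequence of the sorted symmetry $\nu_{\sigma(i)}+\nu_{\sigma(n+1-i)}=n-1$ of Proposition \ref{prop:GoodBasis} alone, and the paper does not derive it from a pointwise relation $\nu_i+\nu_{n+1-i}=n-1$ either --- it quotes it directly as \cite[corollary 17]{Sev1}, where it is deduced from the symmetry of the roots of the Bernstein polynomial $b_h$ proved in \cite{GS}. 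So the fact you flag as needing verification is available, but you should cite it in that form rather than hunting for a pointwise symmetry of the $\nu_i$ in their given order.

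The genuine gap is part (1): you never prove that \eqref{eq:Resolution} is a resolution, yet everything in part (2) rests on it. Two things must be checked. First, that the sequence is a complex at all: the composite of the two differentials is right multiplication by $P^t_1(P^t_2-n\theta)-P^t_2P^t_1=[P^t_1,P^t_2]-n\theta P^t_1$, so one needs the commutation relation $[P^t_1,P^t_2]=n\theta\cdot P^t_1$ --- this is precisely what forces the correction term $-n\theta$ in the leftmost map, and it cannot be taken for granted. Second, exactness: the paper filters $\cD$ by the order of operators and observes that the symbols $\sigma(P^t_1)=(-\theta t\sigma(\partial_t))^n$ and $\sigma(P^t_2)$ form a regular sequence in $\gr(\cD)$, so that the associated graded complex is the (exact) Koszul complex and \eqref{eq:Resolution} is acyclic. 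Your appeal to ``holonomicity'' of $\cG$ cannot replace this: holonomicity tells you that $\mathit{Ext}^i_\cD(\cG,\cD)$ vanishes for $i\neq 2$, but to identify $\mathit{Ext}^2$ with the explicit cokernel $\cD/\cD(\widetilde P_1,\widetilde P_2)$ you must know that the specific complex \eqref{eq:Resolution} is exact (and, for that matter, holonomicity of $\cG$ is itself most easily obtained from the same symbol computation). As written, the proposal proves the second half of the proposition conditionally on the first.
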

\begin{proof}
\begin{enumerate}
\item
From the commutation relation
$[P^t_1, P^t_2] = n\cdot\theta\cdot P^t_1$ we conclude that \eqref{eq:Resolution} is a complex. In order to show that it is exact,
filter $\cD$ as usual by orders of operators. Then it suffices 
to show that the graded object with respect to this filtration of the complex \eqref{eq:Resolution} is acyclic. It is easy to see that the symbols
$\sigma(P^t_1)=(-\theta t \sigma(\partial_t))^n$ and $\sigma(P^t_2)=-\theta^2\sigma(\partial_\theta)-nt\theta\sigma(\partial_t)$ form a regular sequence
in $\gr(\cD)$, so that the only relation between them
is the Koszul relation, and the corresponding (exact) Koszul complex is exactly the graded complex associated
to the above complex of $\cD$-modules, which is hence acyclic.
\item
As the above sequence \eqref{eq:Resolution} is a free right resolution, we obtain that
$\bD\cG$ is the top cohomology group of its dual complex (which is naturally a complex of left $\cD$-modules,
notice that we could have as well started with a left resolution of $\cG$, use it to compute the right
$\cD$-module $\mathit{Ext}^2_\cD(\cG,\cD)$ and then obtain $\bD\cG$ as the left transform of it).
It follows from \cite[corollary 17]{Sev1} that
$\widetilde{P}_1 = \prod_{i=1}^n (-\theta)\left(t\partial_t-\frac{i-1-\nu_i}{n}+1\right) - \frac{c}{n^n}\cdot t$,
due to the symmetry of the numbers $\frac{i-1-\nu_i}{n}$ deduced in loc.cit. from the symmetry of the roots of the Bernstein polynomial $b_h$
shown in \cite{GS}. Now one checks explicitly that the morphism
$$
\begin{array}{rcl}
\Phi:\cD/\cD\cdot (P_1,P_2) & \longrightarrow & \iota^*\left(\cD/\cD\cdot (\widetilde{P}_1,\widetilde{P}_2)\right) \\ \\
m & \longmapsto & m\cdot \theta^{n+2} \cdot t
\end{array}
$$
is well-defined, i.e., that $P_1(\theta^{n+2}t)=P_2(\theta^{n+2}t)=0\in \iota^*\left(\cD/\cD(\widetilde{P}_1,\widetilde{P}_2)\right)$.
It is obviously invertible, and hence yields the desired
isomorphism $\Phi:\cG\rightarrow \iota^*\bD\cG$.
\end{enumerate}

\end{proof}

From the above calculation we can now deduce the first main result.
\begin{theorem}\label{theo:Pairing}
There is a non-degenerate, $(-1)^{n-1}$-symmetric pairing
$S:G(*D) \otimes \iota^*G(*D) \rightarrow \dC[\theta,\theta^{-1},t,t^{-1}]$, which is compatible with
the connections.
\end{theorem}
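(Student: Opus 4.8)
The plan is to construct the pairing $S$ by transporting the self-duality isomorphism $\Phi:\cG\rightarrow\iota^*\bD\cG$ of Proposition \ref{prop:HolDual-Alg} across the identification $\varphi:\cG\xrightarrow{\sim}G(*D)$ furnished by Theorem \ref{theo:GMSystemCyclic}. The conceptual point is standard in the theory of meromorphic connections: a holonomic self-duality datum for a $\cD$-module, together with a comparison identifying the holonomic dual with the naive (meromorphic) dual of the underlying bundle, yields a perfect pairing on the bundle that is flat for the connection. So first I would recall the comparison result (the one alluded to in the introduction to Section \ref{sec:Duality}) that, for a meromorphic bundle such as $G(*D)$ which is $\dC[\theta,\theta^{-1},t,t^{-1}]$-free of rank $n$, the holonomic dual $\bD\cG$ corresponds under $\varphi$ to the $\dC[\theta,\theta^{-1},t,t^{-1}]$-linear dual $G(*D)^\vee=\Hom_{\dC[\theta,\theta^{-1},t,t^{-1}]}(G(*D),\dC[\theta,\theta^{-1},t,t^{-1}])$, with its natural connection.

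Granting this, I would proceed as follows. Combining $\varphi$ with $\Phi$ and $\iota^*\varphi$ gives a chain
\begin{equation*}
G(*D)\xleftarrow{\ \varphi\ }\cG\xrightarrow{\ \Phi\ }\iota^*\bD\cG\xrightarrow{\ \sim\ }\iota^*G(*D)^\vee,
\end{equation*}
all arrows being isomorphisms of left $\cD$-modules, where the last arrow is the comparison above applied after $\iota^*$. Reading off the composite yields a $\dC[\theta,\theta^{-1},t,t^{-1}]$-linear isomorphism $G(*D)\xrightarrow{\sim}\iota^*G(*D)^\vee$, which is the same data as a non-degenerate pairing $S:G(*D)\otimes\iota^*G(*D)\rightarrow\dC[\theta,\theta^{-1},t,t^{-1}]$; non-degeneracy is exactly the statement that the map to the dual is an isomorphism. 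Compatibility with the connection (flatness of $S$) is automatic because every arrow in the chain is $\cD$-linear and the comparison intertwines the connections.

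It then remains to pin down the symmetry type. Here I would exploit the explicit form $\Phi(m)=m\cdot\theta^{n+2}t$: transposing the pairing corresponds to composing $\Phi$ with the dual of $\Phi$ under the identification $\iota^*\bD\bD\cG\cong\cG$, and the scalar $\theta^{n+2}t$ together with the involution $\iota:z\mapsto -z$ governs the sign picked up. Concretely, I expect that applying the construction twice returns the identity up to the sign $(-1)^{n-1}$ coming from $\iota^*(\theta^{n+2})=(-\theta)^{n+2}=(-1)^n\theta^{n+2}$ combined with the transposition sign for a rank-$n$ situation; one verifies this by a direct computation of $S(a\otimes\iota^* b)$ versus $(-1)^{n-1}S(b\otimes\iota^* a)$ on the cyclic generator $[n\vol/dh]$ and its $\cD$-translates, using that $\iota^*$ squares to the identity and that $\bD$ is involutive on holonomic modules. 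The main obstacle I anticipate is precisely this bookkeeping: making the double-dual identification $\bD\bD\cG\cong\cG$ compatible with $\Phi$ and tracking the interaction of the multiplier $\theta^{n+2}t$ with $\iota^*$ carefully enough to extract the exact sign $(-1)^{n-1}$, rather than the existence or flatness of $S$, which follow formally from the isomorphisms already established.
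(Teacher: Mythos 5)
Your proposal follows essentially the same route as the paper: the pairing is defined as the composite $G(*D)\xleftarrow{\varphi}\cG\xrightarrow{\Phi}\iota^*\bD\cG\xleftarrow{\iota^*\Psi}\iota^*G(*D)^\vee$ (the paper constructs the comparison $\Psi$ explicitly via the canonical de Rham--type free resolution of the right module $\Omega^2\otimes G(*D)$ from \cite[lemma A.11]{DS}), and the $(-1)^{n-1}$-symmetry is obtained exactly as you indicate, by checking $\iota^*\bD\Phi=(-1)^{n-1}\Phi$ through the multiplier $\theta^{n+2}t$ and the involution $\iota$. Your sign heuristic ($(-1)^n$ from $\iota^*\theta^{n+2}$ times an extra $-1$ from dualizing the resolution) matches the paper's computation, so the plan is sound.
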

\begin{proof}
We first recall a construction from \cite[lemma A.11]{DS} and \cite[section 2.7]{SM}
which yields a canonical resolution of the right $\cD$-module associated to $G(*D)$ and, as a consequence,  an identification of the holonomic and the meromorphic dual of $G(*D)$. We will
write $\cO:=\cO_{\widehat{S}^*\times T^*}$ and $\Omega^i:=\Omega^i_{\widehat{S}^*\times T^*}$, with $\widehat{S}^*=\Spec\dC[\theta,\theta^{-1}]$.

Consider the de Rham complex $\Omega^\bullet(\cD)$ of $\cD$ which is a resolution by free right $\cD$-modules of
$\Omega^2$, i.e., the exact sequence
$$
0 \longrightarrow \cD  \stackrel{\alpha'}{\longrightarrow} \Omega^1\otimes_{\cO} \cD
\stackrel{\beta'}{\longrightarrow} \Omega^2\otimes_{\cO} \cD \stackrel{\gamma'}{\longrightarrow} \Omega^2 \longrightarrow 0,
$$
where $\alpha'(P)=d\theta\otimes (\partial_\theta\cdot P)+dt\otimes (\partial_t\cdot P)$,
$\beta'(d\theta \otimes P_1 + dt\otimes P_2)=(d\theta\wedge dt)\otimes (\partial_\theta\cdot P_2 - \partial_t \cdot P_1)$
and $\gamma'((d \theta \wedge dt)\otimes Q)=(d \theta \wedge dt)\cdot Q$, where the last term denotes the result of the
right action of the operator $Q$ on the element $d \theta \wedge dt\in \Omega^2$.

Now recall that there is a sequence of isomorphism of right $\cD$-modules
\begin{equation}\label{eq:SaitoIso}
\begin{array}{rcl}
\left(\Omega^i\otimes \cD\right)\otimes G(*D) \cong \Omega^i\otimes\left(\cD\otimes G(*D)\right)
\cong \Omega^i \otimes \left(G(*D)\otimes\cD\right)\cong \left(\Omega^i \otimes G(*D)\right)\otimes\cD
\end{array}
\end{equation}
where all tensor products are over $\cO$, where the left-most and the right-most isomorphisms are the obvious ones, and where the middle-one is induced by
the isomorphism $\cD\otimes G(*D)\stackrel{\cong}{\rightarrow} G(*D)\otimes \cD$ sending $P\otimes m$ to $P\cdot(m\otimes 1)$. Notice
that here $G(*D)\otimes \cD$ carries the trivial right $\cD$-module structure (i.e., the one coming from right multiplication on  the second
factor), but also the left $\cD$-module structure induced by the action $g(m\otimes P)=m\otimes gP$ for $g\in\cO$ and $\xi(m\otimes P):=\xi m\otimes P + m\otimes \xi P$ for $\xi\in\Der(\cO,\cO)\subset \cD$. Similarly, $\cD\otimes G(*D)$ has the trivial left structure, but also a (non-trivial) right structure
defined similarly to the left structure of $G(*D)\otimes \cD$.
Using these isomorphisms, one checks that the right $\cD$-module complex
$\Omega^\bullet(\cD)\otimes G(*D)$ is isomorphic to
\begin{equation}\label{eq:SaitoRes}
0\longrightarrow G(*D)\otimes_\cO \cD \stackrel{\alpha}{\longrightarrow} \Omega^1\otimes_\cO G(*D)\otimes_\cO \cD \stackrel{\beta}{\longrightarrow} \Omega^2\otimes_\cO G(*D)\otimes_\cO \cD
\stackrel{\gamma}{\longrightarrow} \Omega^2\otimes_\cO G(*D)\longrightarrow 0
\end{equation}
where $\alpha$ (resp. $\beta$ and $\gamma$) are induced from $\alpha'\otimes \mathit{Id}_{G(*D)}$ (resp.
$\beta'\otimes \mathit{Id}_{G(*D)}$  and $\gamma'\otimes \mathit{Id}_{G(*D)}$) under the isomorphisms \eqref{eq:SaitoIso} and can be expressed
explicitly as follows
\begin{eqnarray*}
\alpha(m \otimes P) & = & d\theta \otimes \left(\nabla_\theta m\otimes P + m\otimes\partial_\theta P \right)+dt\otimes\left(\nabla_t m\otimes P+m\otimes\partial_t P\right)\\ \\
\beta(d\theta \otimes m_1\otimes P_1 + dt \otimes m_2\otimes P_2) & = & d\theta\wedge dt \otimes \left(\nabla_\theta m_2\otimes P_2+m_2\otimes\partial_\theta P_2
-\nabla_t m_1 \otimes P_1-m_1\otimes\partial_t P_1\right)\\ \\
\gamma(d\theta\wedge dt \otimes (m\otimes P)) & = & ((d\theta\wedge dt)\cdot P)\otimes m - (d\theta\wedge dt)\otimes (P\cdot m)
\end{eqnarray*}
where, as before $(d\theta\wedge dt)\cdot P$ denotes the right action of the operator $P\in\cD$ on the form $d\theta\wedge dt\in\Omega^2$ and
$P\cdot m$ denotes the left action of $P$ on $m$ using the connection on the meromorphic bundle $G(*D)$.
Notice that as $G(*D)$ is $\cO$-free, the complex \eqref{eq:SaitoRes} is still exact, in other words, it yields a canonical resolution by free right $\cD$-modules of the right module associated to $G(*D)$.

The isomorphism $\varphi:\cG\rightarrow G(*D)$ from theorem \ref{theo:GMSystemCyclic} induces an isomorphism $\varphi^r$ on
the associated right $\cD$-modules, and the latter
extends to an isomorphism of complexes
{\small
\begin{equation}\label{diag:HolMerDual}
\xymatrix{
\cD \ar@^{(->}[rr]^{\left(\begin{array}{c}P^t_2-n\theta\\P^t_1\end{array}\right)\cdot} \ar[dd]_{\psi}  && \cD^2 \ar[rr]^{\D\left(P^t_1\;\;-P^t_2\right)\cdot} \ar[dd] && \cD \ar@{->>}[r] \ar[dd]& \cG^r=\cD/(P^t_1,P^t_2)\cD \ar[dd]_{\varphi^r}^{\cong} \\ \\
G(*D)\otimes_\cO \cD \ar@^{(->}[rr]^\alpha         && \Omega^1\otimes_\cO G(*D)\otimes_\cO \cD \ar[rr]^\beta         && \Omega^2\otimes_\cO G(*D)\otimes_\cO \cD \ar@{->>}[r] & \Omega^2\otimes_\cO G(*D).
}
\end{equation}
}

Applying the functor ${\cH\!}om_\cD(-,\cD)$ to the
free part of the above diagram (i.e., to the morphism between the free resolutions of $\cG^r$ resp. $\Omega^2\otimes G(*D)$)
and using the isomorphism ${\cH\!}om_{\cD}(\cF\otimes \cD,\cD)={\cH\!}om_\cO(\cF,\cD) \cong \cD\otimes\cF^\vee$ for any $\cO$-free module $\cF$, we
see that the free resolution of $\Omega^2\otimes G(*D)$ is transformed to the complex $G(*D)^\vee\otimes\mathit{Sp}^\bullet(\cD)$, where $\mathit{Sp}^\bullet(\cD)$
denotes the Spencer complex of $\cD$, i.e., a resolution of $\cD$ by free left $\cD$-modules. This is a free left resolution
of the left $\cD$-modules $G(*D)^\vee$. Hence the transpose of $\psi$ induces
an isomorphism $\Psi:G(*D)^\vee \rightarrow \bD\cG$ of left $\cD$-modules.


The existence of the pairing $S$ can be rephrased as an isomorphism
$\Phi^{\mathit{mer}}:\left(G(*D),\nabla\right)\stackrel{\cong}{\rightarrow}\iota^* \left(G(*D),\nabla\right)^\vee$ of meromorphic bundles with connection (here
$\left(G(*D),\nabla\right)^\vee$ denotes the dual vector bundle together with its dual connection). We define
$\Phi^{mer}$ by the commutative diagram
{\small
\begin{equation}
\label{diag:PhiInduced}
\xymatrix{
\iota^*\bD \cG
 && \iota^*(G(*D),\nabla)^\vee \ar[ll]_{\iota^*\Psi}\\ \\
\cG \ar[rr]^\varphi \ar[uu]^\Phi && (G(*D),\nabla) \ar[uu]_{\Phi^{\mathit{mer}}}
}
\end{equation}
}
where $\Phi$ is the morphism from proposition \ref{prop:HolDual-Alg}, 2. Notice that
$\Phi^{mer}$ is an isomorphism as $\Phi$, $\varphi$ and $\iota^*\Psi$ are so.

In order to show the $(-1)^{n-1}$-symmetry of $S$, we use a variant of a criterion from
\cite[corollary 1.23]{DS}. Namely, it is sufficient to show that the morphism
$$
\iota^*\bD\Phi:\iota^*\bD(\iota^*\bD\cG)=\cG\longrightarrow \iota^*\bD \cG
$$
satisfies $\iota^*\bD\Phi = (-1)^{n-1}\cdot\Phi$. This can be proved by computing a resolution of the right module
$\iota^*(\bD\cG)^r=\iota^*{\cE\!}xt_\cD(\cG,\cD)$, extending the morphism $\Phi^r:\cG^r\rightarrow(\iota^*\bD\cG)^r$
to a morphism of the corresponding resolutions and dualizing. In other words, we consider the following morphism of complexes
\begin{equation}
\label{diag:DualizingHolDMap}
\xymatrix{
\cD \ar@^{(->}[rrr]^{\begin{pmatrix}\SC P_2 \\ \SC P'_1\end{pmatrix}\cdot} &&& \cD^2 \ar[rrrr]^{\begin{pmatrix}\SC P'_1 &\SC n\theta-P_2\end{pmatrix}\cdot} &&&& \cD \ar@{->>}[rr] && \left(\iota^*\bD\cG\right)^r
\\ \\
\cD \ar[uu]^{(-a)\cdot}\ar@^{(->}[rrr]^{\begin{pmatrix}\SC P^t_2-n\theta\\ \SC P^t_1\end{pmatrix}\cdot} &&& \cD^2
\ar[uu]_{\begin{pmatrix} \SC a & \SC 0 \\ \SC 0 & \SC -a\end{pmatrix}\cdot}
\ar[rrrr]^{\begin{pmatrix}\SC P^t_1 &\SC -P^t_2\end{pmatrix}\cdot} &&&& \cD \ar[uu]^{a \cdot} \ar@{->>}[rr] && \cG^r \ar[uu]^{\Phi^r}
}
\end{equation}
where $P'_1=\prod_{i=1}^n (-\theta)\left(t\partial_t-\frac{i-1-\nu_i}{n}\right) - \frac{c}{n^n}\cdot t$ and where we have put
$a:=\theta^{n+2}t$ for short. The dual of the leftmost morphism
induces the map
$$
\begin{array}{rcl}
\bD\Phi:\bD\iota^*\bD \cG \cong \iota^*\cG\cong \cD/\cD(P'_1,P_2) & \longrightarrow & \bD\cG=\cD/\cD(P^t_1,n\theta-P^t_2) \\ \\
m & \longmapsto & m \cdot (-a)=m\cdot (-1) \cdot \theta^{n+2}t
\end{array}
$$
Therefore the morphism $\iota^*\bD\Phi$ is given by right multiplication with $(-1)\cdot (-\theta)^{n+2}t$ and hence satisfies
$\iota^*\bD\Phi = (-1)^{n+1}\cdot\Phi=(-1)^{n-1}\cdot\Phi$, as required.

\end{proof}

In the remainder of this section, we will show a more refined version of theorem \ref{theo:Pairing} taking into account the behavior of the pairing $S$ with respect to the lattice $G_0(*D)$. More precisely, we have the following result.
\begin{theorem}\label{theo:PairingLattice}
The pairing $S$ from theorem \ref{theo:Pairing} satisfies
$S(G_0(*D),G_0(*D))\subset \theta^{n-1}\dC[\theta,\theta^{-1},t,t^{-1}]$. Moreover,
it induces a non-degenerate symmetric pairing
$S_0:(G_0(*D)/\theta\cdot G_0(*D))\otimes(G_0(*D)/\theta\cdot G_0(*D)) \rightarrow \theta^{n-1}\dC[t,t^{-1}]$.
\end{theorem}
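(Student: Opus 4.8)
The plan is to compute $S$ explicitly in the basis $\underline{\omega}=(\omega_1,\ldots,\omega_n)$ of Proposition \ref{prop:GoodBasis}, for which $G_0(*D)=\bigoplus_{i=1}^n\dC[\theta,t,t^{-1}]\,\omega_i$ and the connection is in the normal form \eqref{eq:Basis}. First I would record the Gram matrix $\Sigma:=\bigl(S(\omega_i,\iota^*\omega_j)\bigr)_{i,j}$, whose entries lie a priori in $\dC[\theta,\theta^{-1},t,t^{-1}]$, and translate the flatness of $S$ (Theorem \ref{theo:Pairing}) into a differential system. Since $\nabla(\underline{\omega})=\underline{\omega}\cdot\bigl[(A_0\theta^{-1}+A_\infty)\tfrac{d\theta}{\theta}+(-A_0\theta^{-1}+A'_\infty)\tfrac{dt}{nt}\bigr]$ and $\iota$ substitutes $-\theta$ for $\theta$, compatibility with $\nabla_{\partial_\theta}$ and $\nabla_{\partial_t}$ yields, schematically,
\[
\theta\partial_\theta\Sigma=(A_0\theta^{-1}+A_\infty)^{\mathrm t}\,\Sigma+\Sigma\,\iota^*(A_0\theta^{-1}+A_\infty),\qquad nt\,\partial_t\Sigma=(-A_0\theta^{-1}+A'_\infty)^{\mathrm t}\,\Sigma+\Sigma\,\iota^*(-A_0\theta^{-1}+A'_\infty).
\]
Because $A_\infty=\diag(\nu_i)$ and $A'_\infty=\diag(0,\ldots,n-1)-A_\infty$ are diagonal, the diagonal part of this system fixes, for each entry $\Sigma_{ij}$, its weight in $\theta$ and $t$: the $\theta$-exponent is governed by $\nu_i+\nu_j$ and the $t$-exponent by $(i-1-\nu_i)+(j-1-\nu_j)$, while the off-diagonal part $A_0\theta^{-1}$ couples neighbouring entries. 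The overall normalisation (and the already established $(-1)^{n-1}$-symmetry) is pinned down by the self-duality isomorphism $\Phi$ of Proposition \ref{prop:HolDual-Alg}, which is right multiplication by $\theta^{n+2}t$.

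Solving this system, I expect each $\Sigma_{ij}$ to be a homogeneous expression whose lowest occurring power of $\theta$ is at least $n-1$, the nonzero leading ($\theta^{n-1}$) coefficient appearing exactly when the spectral symmetry $\nu_i+\nu_j=n-1$ of Proposition \ref{prop:GoodBasis} holds. Granting this, $S(\omega_i,\iota^*\omega_j)\in\theta^{n-1}\dC[\theta,\theta^{-1},t,t^{-1}]$ for all $i,j$, and since $G_0(*D)=\bigoplus_i\dC[\theta,t,t^{-1}]\,\omega_i$ this gives the first assertion $S(G_0(*D),G_0(*D))\subset\theta^{n-1}\dC[\theta,\theta^{-1},t,t^{-1}]$ at once.

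For the second assertion I would read off $S_0$ as the coefficient of $\theta^{n-1}$, i.e.\ the reduction of $\theta^{-(n-1)}S$ modulo $\theta$. Its Gram matrix in the classes $[\omega_1],\ldots,[\omega_n]$ is, by the previous step, supported on the pairs $(i,j)$ with $\nu_i+\nu_j=n-1$; by the symmetry of the $\nu_i$ this support meets every row and column exactly once, so the matrix is anti-diagonal with nonzero entries and hence invertible over $\dC[t,t^{-1}]$. This is the non-degeneracy of $S_0$. Symmetry of $S_0$ (rather than $(-1)^{n-1}$-symmetry) follows because $\iota^*$ acts on the factor $\theta^{n-1}$ by $(-1)^{n-1}$, which cancels the $(-1)^{n-1}$ from Theorem \ref{theo:Pairing}. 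Alternatively, and perhaps more conceptually, one identifies $S_0$ with the Grothendieck residue pairing on $G_0(*D)/\theta\,G_0(*D)\cong H^0(V,\cT^1_{\cR_h/T}(f))\cdot\vol\cong\dC[t,t^{-1},\mu]/\bigl((t\mu)^n-\tfrac{c}{n^n}t\bigr)$ (the last isomorphisms coming from the proof of Theorem \ref{theo:GMSystemCyclic}); the residue pairing of this $\dC[t,t^{-1}]$-free complete intersection algebra is automatically non-degenerate and symmetric.

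The main obstacle will be the middle step: controlling the coupling introduced by the term $A_0\theta^{-1}$ in the differential system well enough to prove the uniform lower bound $\theta^{n-1}$ and to pin down the leading coefficients, together with fixing the single overall constant from $\Phi$. Once the system is solved in closed form in the $\underline{\omega}$-basis, both the containment and the non-degeneracy of $S_0$ are immediate; identifying $S_0$ with the residue pairing is the cleanest route to non-degeneracy should the explicit solution turn out to be cumbersome.
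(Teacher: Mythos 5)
Your proposal reverses the logical architecture of the paper, and the step you yourself flag as ``the main obstacle'' is not a technicality --- it is the entire content of the theorem, and your sketch gives no mechanism for closing it. Concretely: writing $\Sigma=\sum_k\Sigma^{(k)}\theta^k$, the $\theta$-flatness equation gives the recursion $(k-\nu_i-\nu_j)\Sigma^{(k)}_{ij}=(A_0^{\mathrm t}\Sigma^{(k+1)}-\Sigma^{(k+1)}A_0)_{ij}$. This only shows that the \emph{top} nonzero coefficient of each entry sits in degree $\nu_i+\nu_j$ (which can exceed $n-1$, since only the antidiagonal sums equal $n-1$); nothing in the system prevents nonzero coefficients from propagating downward through the $A_0$-coupling to powers of $\theta$ below $n-1$. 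Flat pairings with Laurent-polynomial entries form a finite-dimensional space, and Laurent-polynomiality plus homogeneity plus the $(-1)^{n-1}$-symmetry do not single out the one with pole order exactly $n-1$ along $\theta=0$. Indeed this is precisely the computation attempted in \cite[theorem 4.13]{dGMS}, which could only be pushed through under an extra hypothesis on the multiplicity of the smallest spectral number; the paper's Theorem \ref{theo:VSSolution} removes that hypothesis by feeding in Theorem \ref{theo:PairingLattice} (together with Lemma \ref{lem:Homog}) as an \emph{input} to the Gram-matrix argument. Deriving Theorem \ref{theo:PairingLattice} from the Gram-matrix computation therefore either runs in a circle or reproduces exactly the gap that the earlier paper could not close. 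Likewise, treating the normalisation by $\Phi=(\cdot\,\theta^{n+2}t)$ as ``fixing a single constant'' hides the real issue: $\Phi$ lives on the holonomic dual $\bD\cG$, and transporting it to a statement about the lattice $G_0(*D)^\vee$ requires knowing how the comparison isomorphism $\Psi$ between holonomic and meromorphic duals interacts with the lattices --- which is what must be proved.

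The paper's actual proof supplies exactly this missing mechanism: it completes along $\theta=0$, puts the filtration $F_\bullet$ on $\cD^\wedge$ with $\deg\partial_\theta=2$, $\deg\theta=-1$ (so that $F_k\cG^\wedge=\theta^{-k}\cG_0^\wedge$), shows this filtration is Zariskian, and proves that both free resolutions in diagram \eqref{diag:HolMerDualFormal} are \emph{strict} filtered resolutions by checking that the associated graded complexes are Koszul complexes of regular sequences. Strictness forces every vertical map, and hence $\Psi^\wedge$ and $\Phi^\wedge$, to shift the filtration by an exactly computable amount, yielding $\Phi^{mer,\wedge}:G_0(*D)^\wedge\stackrel{\cong}{\to}\iota^*(\theta^{n-1}G_0(*D)^\vee)^\wedge$; both the containment and the non-degeneracy of $S_0$ fall out of this lattice isomorphism at once. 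Your fallback of identifying $S_0$ with the Grothendieck residue pairing on $\dC[t,t^{-1},\mu]/((t\mu)^n-\tfrac{c}{n^n}t)$ is plausible but is itself an unproved compatibility between the abstractly constructed holonomic duality and the residue pairing, so it does not rescue the argument as it stands.
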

\begin{proof}
It is clear that the statement of the theorem is equivalent to the fact that the morphism
$\Phi^{\mathit{mer}}:G(*D)\stackrel{\cong}{\longrightarrow}\iota^* G(*D)$ appearing in the proof of the previous
theorem sends $G_0(*D)$ isomorphically onto $\iota^*\left(\theta^{n-1}\cdot G_0(*D)^\vee\right)$, where
$$
G_0(*D)^\vee={\cH\!}om_{\dC[\theta,t,t^{-1}]}(G_0(*D),\dC[\theta,t,t^{-1}])\cong
\left\{
l\in G(*D)^\vee\,|\, l(G_0(*D)) \subset \dC[\theta,t,t^{-1}]
\right\}\subset G(*D)^\vee.
$$
In order to show this statement, we will consider completions along $\theta=0$. We write $\cO^\wedge:=\dC[[\theta]][\theta^{-1},t,t^{-1}]$.
For any $\cO$-module $\cF$ we denote by $\cF^\wedge$ the tensor product with $\cO^\wedge$.
We have thus a duality isomorphism $\Phi^{mer,\wedge}:G(*D)^\wedge \longrightarrow \left(\iota^*G(*D)^\vee\right)^\wedge$.
As both $\iota^*\left(\theta^{n-1}\cdot G_0(*D)^\vee\right)$ and $\Phi^{mer}(G_0(*D))$ are lattices
inside $\iota^*\left(G(*D)^\vee\right)$,
it is sufficient to show that $\Phi^{mer,\wedge}:G_0(*D)^\wedge \stackrel{\cong}{\longrightarrow}\left(\iota^*(\theta^{n-1}\cdot G_0(*D)^\vee)\right)^\wedge$.
In order to show this property, we will consider the formal versions of the exact sequences \eqref{eq:Resolution} and \eqref{eq:SaitoRes}, which are
exact sequences of $\cD^\wedge:=\dC[[\theta]][\theta^{-1},t,t^{-1}]\langle\partial_\theta,\partial_t\rangle$-modules.

We have thus the following formal version of the diagram \eqref{diag:HolMerDual}
{\small
\begin{equation}\label{diag:HolMerDualFormal}
\xymatrix{
\cD^\wedge \ar@^{(->}[rr]^{\left(\begin{array}{c}P^t_2-n\theta\\P^t_1\end{array}\right)\cdot} \ar[dd]_{\psi^\wedge}  && (\cD^\wedge)^2 \ar[rr]^{\D\left(P^t_1\;\;-P^t_2\right)\cdot} \ar[dd] && \cD^\wedge \ar@{->>}[r] \ar[dd]& (\cG^\wedge)^r=\cD^\wedge/(P^t_1,P^t_2)\cD^\wedge \ar[dd]_{(\varphi^\wedge)^r}^{\cong} \\ \\
(G(*D)\otimes \cD)^\wedge\ar@^{(->}[rr]^{\alpha^\wedge}         && (\Omega^1\otimes G(*D)\otimes \cD)^\wedge\ar[rr]^{\beta^\wedge}         && (\Omega^2\otimes G(*D)\otimes \cD)^\wedge \ar@{->>}[r] & (\Omega^2\otimes G(*D))^\wedge.
}
\end{equation}
}

Consider
as in \cite[lemma A.12]{DS}
the filtration $F_\bullet$ on $\cD^\wedge$ which extends the order filtration on $\dC[t,t^{-1}]\langle\partial_t\rangle$
and for which $\partial_\theta$ has degree two and $\theta$ has degree $-1$ (this is not the usual
filtration by order as considered in the proof of proposition \ref{prop:HolDual-Alg}, 1. above).
Notice that by an argument like in \cite[page 4]{SaitoSturmTaka}, we see that $gr_\bullet^F \cD^\wedge\cong \dC[[\theta]][\theta^{-1},t,t^{-1},u,v]$, where $u$ resp. $v$ represents the class of $\partial_\theta$ resp. $\partial_t$. The ring $gr_\bullet^F$ is graded by $\deg(t)=0$, $\deg(\theta)=-1$, $\deg(u)=2$ and $\deg(v)=1$.
Notice further that $F_\bullet$ induces
a good filtration on $\cG^\wedge$ with $\cG_0^\wedge=F_0\cG^\wedge$, and that we have $F_k\cG^\wedge = \theta^{-k} \cG_0^\wedge$, this follows from the fact that
$\cG^\wedge=\oplus_{i=0}^{n-1} \dC[[\theta]][\theta^{-1},t,t^{-1}] Q_i$ and that any $Q_i$ is of degree zero (i.e. the minimal $k$ such that $Q_i\in F_k \cD^\wedge$ is zero). Moreover, put $F_k G(*D)^\wedge := \theta^{-k}G_0(*D)^\wedge$, then as $F_k\cG^\wedge = \theta^{-k} \cG^\wedge_0$ the isomorphism $\varphi^\wedge:\cG^\wedge\rightarrow G(*D)^\wedge$ induced from the isomorphism $\varphi$ from theorem \ref{theo:GMSystemCyclic} is strictly filtered.

We will consider induced filtrations $F_\bullet (\Omega^1)^\wedge$ resp. $F_\bullet(\Omega^2)^\wedge$ defined by the filtration on $\cO^\wedge$ induced from
the filtration on $\cD^\wedge$. These filtrations are defined such that
$d\theta\in F_{-2}(\Omega^1)^\wedge\backslash F_{-3}(\Omega^1)^\wedge$, $dt\in F_{-1}(\Omega^1)^\wedge\backslash F_{-2}(\Omega^1)^\wedge$
and $d\theta\wedge dt\in F_{-3}(\Omega^2)^\wedge\backslash F_{-4}(\Omega^2)^\wedge$. The map of right $\cD^\wedge$-modules
$(\varphi^\wedge)^r:(\cG^\wedge)^r\rightarrow ((\Omega^2)\otimes G(*D))^\wedge$ associated to the morphism $\varphi^\wedge$
sends $1$ to $(d\theta\wedge dt)\otimes (\vol/dh)$ and
thus satisfies $(\varphi^\wedge)^r:F_\bullet (\cG^\wedge)^r \stackrel{\cong}{\rightarrow} F_{\bullet-3}\left(\Omega^2\otimes G(*D)\right)^\wedge$.

Notice that we have $\nabla_\theta F_kG(*D)^\wedge \subset F_{k+2} G(*D)^\wedge$ (because $G_0(*D)$ is stable by $\theta^2\nabla_\theta$) and
that similarly the inclusion $\nabla_t F_k G(*D)^\wedge \subset F_{k+1} G(*D)^\wedge$ holds.
Using our convention for the induced filtration on $\Omega^1$ and $\Omega^2$, this shows that the formal version of the
resolution \eqref{eq:SaitoRes} is filtered.  The same is obviously true for the formal version of the sequence \eqref{eq:Resolution},
as all components of the matrices defining the differentials in that sequence have degree $0$ for the filtration $F_\bullet\cD^\wedge$, so that we can define the filtration on each term in the standard way. Hence all horizontal maps of the diagram \eqref{diag:HolMerDualFormal} respect the induced filtration on each term.
Actually, we can show more: These sequences are even \emph{strict} resolutions of $(\cG^\wedge)^r$ resp. $(\Omega^2\otimes G(*D))^\wedge$.
Recall that a filtered complex $(\cK^\bullet, F_\bullet, d)$ of coherent $\cD$-modules is a strict
resolution of a coherent $\cD$-module $\cM$ iff for any $k\in\dZ$ the induced morphism
$(F_k\cK^\bullet,d) \rightarrow F_k\cM$ (the latter object seen as a complex concentrated in one degree) is a quasi-isomorphism.
In order to check this property, it suffices to show that the induced morphism $(gr^F_k \cK^\bullet,d) \rightarrow gr^F_k\cM$ is a quasi-isomorphism
provided that for any $i$, $F_k \cK^i$ is a finite $F_0\cD^\wedge$-module.
This criterion for strictness follows from \cite[proposition 1.1.3 d)]{SchapiraBook} (and goes back to \cite[proposition 3.2.7]{SKK}), namely, the filtration $F_\bullet\cD^\wedge$ is in fact \emph{Zariskian} in the sense of \cite[definition 1.1.2 2)]{SchapiraBook}. To show this, we remark that
the ring $\cD^\wedge$ can be identified with the ring of formal micro-differential operators
on $S\times T^*$ (more precisely, consider the sheaf of formal micro-differential operators on $T^*(S\times T^*)\backslash T_{S\times T^*}^*(S\times T^*)$, and restrict it to the image of the section $ds:S\times T^* \rightarrow T^*(S\times T^*)$ so that it can be considered as a sheaf on $S\times T^*$ and take its global sections) via Fourier-Laplace transformation sending $\theta$ to $\partial_s^{-1}$ and $\partial_\theta$ to $-s$, and then the filtration $F_\bullet$ on $\cD^\wedge$ is nothing but the filtration induced by the degree of (micro-)differential operators.
Then the Zariskian property is shown in \cite[proposition 2.2.1]{SchapiraBook}.
Notice that the finiteness over $F_0\cD^\bullet$ of each filtration step of any module in both of the horizontal exact sequences in diagram \eqref{diag:HolMerDualFormal} obviously holds.


Let us first show that the graded object of the upper sequence in diagram \eqref{diag:HolMerDualFormal} is acyclic.
This graded complex is the Koszul complex of the symbols (with respect to $F_\bullet \cD^\wedge$) of $P_1^t$ and $-P_2^t$
in $\gr_\bullet^F(\cD^\wedge)$, i.e., the Koszul complex of $(-\theta\cdot t\cdot v)^n-\frac{c}{n^n}t$ and $\theta^2\cdot u+nt\theta\cdot v$. The ideal
generated by these two functions has codimension two,
notice that
$$
\frac{\gr_\bullet^F(\cD^\wedge)}{((-\theta\cdot t\cdot v)^n-\frac{c}{n^n}t,\theta^2\cdot u+nt\theta\cdot v)}
=
\frac{\gr_\bullet^F(\cD^\wedge)}{((-\theta\cdot t\cdot v)^n-\frac{c}{n^n}t,u+\theta^{-1}\cdot nt\cdot v)}
\cong
\frac{\dC[[\theta]][\theta^{-1},t,t^{-1},v]}{((-\theta\cdot t\cdot v)^n-\frac{c}{n^n}t)}
$$
and the latter ring is obviously two-dimensional. Hence the symbols of $P_1^t$ and $-P_2^t$ define a complete intersection and thus form a regular sequence
in $\gr_\bullet^F(\cD^\wedge)$. It follows that the Koszul complex of these two functions is a resolution of the quotient ring. We conclude that the upper line of diagram \eqref{diag:HolMerDualFormal} is a strict resolution of $(\cG^\wedge)^r$.

A similar argument applies to the lower line of this diagram: The graded complex
$\gr_\bullet^F(\Omega^\bullet(\cD^\wedge))$ is a resolution of $gr_\bullet^F((\Omega^2)^\wedge)$ since it is simply the Koszul complex of the elements $u,v$ in the ring $\gr^F_\bullet(\cD^\wedge)=\dC[[\theta]][\theta^{-1},t,t^{-1},u,v]$. Hence the graded complex of $(\Omega^\bullet(\cD)\otimes G(*D))^\wedge$ is a resolution of $\gr^F_\bullet(\Omega^2\otimes G(*D)^\wedge)$ (since the differential on $(\Omega^\bullet(\cD)\otimes G(*D))^\wedge$ is the identity on the second factor), so that $(\Omega^\bullet(\cD)\otimes G(*D))^\wedge$ is a strictly filtered resolution of $(\Omega^2\otimes G(*D))^\wedge$.
Moreover, the isomorphism $((\Omega^\bullet\otimes
\cD)\otimes G(*D))^\wedge \stackrel{\cong}{\rightarrow}((\Omega^\bullet\otimes G(*D))\otimes \cD)^\wedge$ used in the proof of theorem \ref{theo:Pairing} is strict because
it is filtered and its inverse (induced by the isomorphism $(G(*D)\otimes \cD)^\wedge \stackrel{\cong}{\rightarrow} (\cD\otimes G(*D))^\wedge$) also respects the filtration. We conclude that the formal version of the complex \eqref{eq:SaitoRes}, i.e., the lower line of diagram \eqref{diag:HolMerDualFormal} is a strict
resolution of $(\Omega^2 \otimes G(*D))^\wedge$.

We have now seen that both lines of this diagram are strict, and moreover that the rightmost isomorphism $(\varphi^\wedge)^r$ strictly shifts
the filtration by $-3$. Then the same holds for any of the vertical morphisms, and the induced isomorphism
$\Psi^\wedge$ satisfies
$$
\Psi^\wedge:\left(F^\vee_{\bullet+3} (G(*D)^\vee) \right)^\wedge \stackrel{\cong}{\longrightarrow} \left(F^{\bD}_\bullet (\bD \cG)\right)^\wedge.
$$
where
$F^\vee_\bullet (G(*D)^\vee) := {\cH\!}om_{\dC[\theta,t,t^{-1}]}(F_\bullet G(*D), \dC[\theta,t,t^{-1}])$
and where $F^{\bD}_\bullet(\bD \cG)$ is the filtration on $\bD \cG$ dual to $F_\bullet \cG$ in the sense of
\cite[§ 2.4]{Saito1}. Moreover, due to the strictness of the resolution of $(\cG^\wedge)^r$, the (formal version of the)
duality isomorphism from proposition \ref{prop:HolDual-Alg} satisfies
\begin{equation}\label{eq:FilteredDual}
\Phi^\wedge:(\cG^\wedge,F^\wedge_\bullet) \stackrel{\cong}{\longrightarrow} \iota^*((\bD\cG)^\wedge,(F^{\bD}_{\bullet-(n+2)})^\wedge),
\end{equation}
where again $F^{\bD}_\bullet$ is the
filtration dual to $F_\bullet$. Combining this with the isomorphism $\varphi^\wedge:(F_0\cG)^\wedge=\cG^\wedge_0\stackrel{\cong}{\rightarrow}G_0(*D)^\wedge$,
noticing that $F^\vee_0 (G(*D)^\vee)=G_0(*D)^\vee$  and
looking at the (formal version of the) diagram \eqref{diag:PhiInduced} we finally conclude that
$$
\Phi^{mer,\wedge}=(\iota^*\Psi)^\wedge\circ \Phi^\wedge \circ (\varphi^{-1})^\wedge:G_0(*D)^\wedge \stackrel{\cong}{\longrightarrow}\left(\iota^* (\theta^{n-1}\cdot G_0(*D)^\vee)\right)^\wedge,
$$
as required.
\end{proof}

\section{Limit and weak logarithmic Frobenius structures}
\label{sec:Frobenius}

In this final section we indicate how the theorems \ref{theo:Pairing} and \ref{theo:PairingLattice} from the last section
can be used to sharpen the results from \cite[section 4 and 5]{dGMS}. We start with a preliminary lemma.
Consider the grading of $\dC[\theta,\theta^{-1},t,t^{-1}]$ for which $\deg(\theta)=1$ and $\deg(t)=n$.
We obtain an induced grading on $\cD$ with $\deg(\partial_\theta)=-1$
and $\deg(\partial_t)=-n$. Notice that this grading does neither induce the filtration $F_\bullet$ on $\cD$ considered in
the proof of theorem \ref{theo:PairingLattice} nor the usual order filtration used in the proof of proposition \ref{prop:HolDual-Alg}.
The module $G(*D)$ (as well as its various lattices) carries a compatible grading, we have $\deg(\omega_i)=i-1$ and
the connection operator $\nabla$ on $G(*D)$ from proposition \ref{prop:GoodBasis} as well as the isomorphism
$\varphi$ from theorem \ref{theo:GMSystemCyclic} are homogenous of degree $0$.
\begin{lemma}{(see also \cite[conjecture 5.5]{dGMS})}\label{lem:Homog}
The pairing $S$ from theorem \ref{theo:Pairing} is homogenous of degree $0$ with respect to this natural grading of $(G(*D),\nabla)$,
i.e., we have
$$
S\left(G(*D)_k,G(*D)_l\right)\subset \dC[\theta,\theta^{-1},t,t^{-1}]_{k+l}.
$$
\end{lemma}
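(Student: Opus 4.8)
The plan is to reduce the homogeneity statement about the pairing $S$ to a homogeneity statement about the duality isomorphism $\Phi^{\mathit{mer}}$ constructed in the proof of theorem \ref{theo:Pairing}, and then to verify this last property by tracking the grading through the resolutions appearing there. Concretely, since $S$ is defined (up to the canonical identifications) by the isomorphism $\Phi^{\mathit{mer}}:(G(*D),\nabla)\stackrel{\cong}{\rightarrow}\iota^*(G(*D),\nabla)^\vee$, and since $S(m_1,m_2)=\Phi^{\mathit{mer}}(m_1)(m_2)$ (the dual pairing), it suffices to show that $\Phi^{\mathit{mer}}$ is homogeneous of degree $n-1$ with respect to the grading on $G(*D)$ and the naturally induced grading on $\iota^*(G(*D))^\vee$; the shift by $n-1$ then reproduces exactly the claimed containment $S(G(*D)_k,G(*D)_l)\subset\dC[\theta,\theta^{-1},t,t^{-1}]_{k+l}$, once one checks that $\deg(\omega_i^\vee)=-(i-1)$ for the dual basis so that the pairing of degree-$k$ and degree-$l$ elements lands in degree $k+l$.

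First I would record that the grading with $\deg(\theta)=1$, $\deg(t)=n$, $\deg(\partial_\theta)=-1$, $\deg(\partial_t)=-n$ is already known (from the sentence preceding the lemma) to make $\varphi$ homogeneous of degree $0$ and $\nabla$ homogeneous of degree $0$; in particular the presentation $\cG=\cD/\cD\cdot(P_1,P_2)$ is graded, because one checks directly that $P_1$ is homogeneous of degree $n$ (each factor $\theta(t\partial_t-\tfrac{i-1-\nu_i}{n})$ has degree $1$, and the product of $n$ of them has degree $n$, matching $\deg(c\cdot t/n^n)=n$) and $P_2=\theta^2\partial_\theta+nt\theta\partial_t$ is homogeneous of degree $1$. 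Hence all the operators entering proposition \ref{prop:HolDual-Alg} — the transforms $P_1^t,P_2^t,\widetilde P_1,\widetilde P_2$ and the resolution \eqref{eq:Resolution} — are homogeneous for this grading as well, so the whole duality computation is a computation of \emph{graded} left $\cD$-modules and graded morphisms.

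The key step is then to inspect the explicit formula for the duality isomorphism $\Phi$ of proposition \ref{prop:HolDual-Alg}, 2., namely $\Phi(m)=m\cdot\theta^{n+2}\cdot t$. Right multiplication by $\theta^{n+2}t$ raises degree by $(n+2)+n=2n+2$; combining this with the degree shift $-3$ coming from the middle identification (the dualizing step via $\mathrm{Sp}^\bullet(\cD)$ and the fact that $d\theta\wedge dt$ sits in a fixed graded piece) and with the grading on the Spencer/de Rham complexes should produce precisely the shift $n-1$ on $\Phi^{\mathit{mer}}$; I would pin down these shifts by assigning degrees to $d\theta$ and $dt$ compatibly with $\deg(\theta)=1,\deg(t)=n$ (so $\deg(d\theta)=1$, $\deg(dt)=n$, $\deg(d\theta\wedge dt)=n+1$) and checking that every horizontal and vertical map in the diagrams \eqref{diag:HolMerDual} and \eqref{diag:PhiInduced} is homogeneous of the appropriate degree. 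The main obstacle will be exactly this bookkeeping of the accumulated degree shifts across the three identifications in \eqref{eq:SaitoIso} and through the functor ${\cH\!}om_\cD(-,\cD)$: one must verify that the canonical resolution \eqref{eq:SaitoRes} is graded with the shifts dictated by $\deg(d\theta\wedge dt)=n+1$, that the transpose $\Psi$ of $\psi$ preserves the grading up to the expected shift, and that no spurious degree discrepancy arises from the dual connection on $G(*D)^\vee$ — in other words, that the sign involution $\iota$ (which is grading-preserving, since it only changes $z\mapsto -z$) interacts correctly with the dual grading. Once the degree of $\Phi^{\mathit{mer}}$ is confirmed to be $n-1$, the homogeneity of $S$ follows formally, and the lemma is proved.
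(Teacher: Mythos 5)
Your overall strategy is the paper's own: reduce the homogeneity of $S$ to a degree computation for $\Phi^{\mathit{mer}}$ via the diagrams \eqref{diag:HolMerDual} and \eqref{diag:PhiInduced}, using that the presentation $\cG=\cD/\cD\cdot(P_1,P_2)$ and both resolutions are graded (your check that $P_1$ has degree $n$ and $P_2$ degree $1$ is correct). But the reduction as you state it is inconsistent, and the inconsistency sits exactly where the content of the lemma lies. With the dual grading you yourself adopt ($\deg(\omega_i^\vee)=-(i-1)$, so that evaluation $G(*D)^\vee\otimes G(*D)\to\dC[\theta,\theta^{-1},t,t^{-1}]$ is degree-additive), the claimed containment $S(G(*D)_k,G(*D)_l)\subset\dC[\theta,\theta^{-1},t,t^{-1}]_{k+l}$ is equivalent to $\deg(\Phi^{\mathit{mer}})=0$, not $n-1$: a degree-$(n-1)$ map would land you in degree $k+l+n-1$. (Sanity check against \eqref{eq:Sconstant-2}: $S(\omega_i,\omega_{n+1-i})\in\dC\cdot\theta^{n-1}$ has degree $n-1=(i-1)+(n-i)$, and $\Phi^{\mathit{mer}}(\omega_i)=c\,\theta^{n-1}\omega_{n+1-i}^\vee$ has degree $(n-1)-(n-i)=i-1=\deg(\omega_i)$, i.e.\ degree shift $0$.) The shift by $n-1$ you are trying to engineer is a feature of the \emph{lattice} statement of theorem \ref{theo:PairingLattice}, not of the grading here; likewise the ``$-3$'' you invoke belongs to the filtration $F_\bullet$ with $\deg(\theta)=-1$, $\deg(\partial_\theta)=2$ used in that other proof and has no counterpart in the present grading, where the only relevant twist is $\deg(d\theta\wedge dt)=n+1$.

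Second, the actual degree count --- which is the entire proof --- is deferred as ``bookkeeping to be pinned down'', and the answer you announce for it is the wrong one, so carrying out the plan literally would not close. What must be checked (and is what the paper checks) is: $\alpha$ and $\beta$ are homogeneous of degree $0$, so after regrading the free resolution of $\cG^r$ as $\cD[n+1]\to\cD[n]\oplus\cD[1]\to\cD$ all vertical maps in \eqref{diag:HolMerDual} have the same degree as $\varphi^r$, namely $n+1$ (since $\varphi^r(1)=(d\theta\wedge dt)\otimes(n\vol/dh)$ with $\deg(d\theta\wedge dt)=n+1$ and $\deg(\vol/dh)=0$); undoing the shift gives $\deg(\psi)=2n+2$, hence $\deg(\Psi)=2n+2$, which exactly matches $\deg(\Phi)=2n+2$ for right multiplication by $\theta^{n+2}t$ from proposition \ref{prop:HolDual-Alg}; these cancel in \eqref{diag:PhiInduced} (together with $\deg(\varphi)=0$) to give $\deg(\Phi^{\mathit{mer}})=0$. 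The ingredients you list are the right ones, but you need to correct the target degree and actually perform this cancellation rather than postpone it.
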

\begin{proof}
The statement we need is equivalent to the fact that the isomorphism $\Phi^{\mathit{mer}}$ from the proof of theorem \ref{theo:Pairing} is
homogenous of degree $0$. In order to show this, we need again to consider the comparison
isomorphism
$\Psi:(G(*D),\nabla)^\vee \rightarrow \bD(G(*D))$ from the proof of theorem \ref{theo:Pairing}.
Recall that it was constructed from diagram \eqref{diag:HolMerDual}, which we recall below. However, we will write
it in such a way that all morphism are homogenous for the above grading. We have
{\small
$$
\xymatrix{
\cD[n+1] \ar@^{(->}[rr]^{\left(\begin{array}{c}P^t_2-n\theta\\P^t_1\end{array}\right)} \ar[dd]_{\psi'}  && \cD[n]\oplus\cD[1]\ar[rr]^{\D\left(P^t_1\;\;-P^t_2\right)\cdot} \ar[dd] && \cD \ar@{->>}[r] \ar[dd]& \cG^r=\cD/(P^t_1,P^t_2)\cD \ar[dd]_{\varphi^r}^{\cong} \\ \\
G(*D)\otimes_\cO \cD \ar@^{(->}[rr]^\alpha         && \Omega^1\otimes_\cO G(*D)\otimes_\cO \cD \ar[rr]^\beta         && \Omega^2\otimes_\cO G(*D)\otimes_\cO \cD \ar@{->>}[r] & \Omega^2\otimes_\cO G(*D)
}
$$
}
$\!\!\!$where $\cD[k]$ is $\cD$ with the shifted grading defined by $\cD[k]_l=\cD_{k-l}$.
From the description of the morphisms $\alpha$ and $\beta$ given in the proof of theorem \ref{theo:Pairing} we see
that both are homogenous of degree $0$.
Hence the above diagram is a morphism of $0$-graded complexes of right $\cD$-modules
(i.e., a morphism of complexes of graded right $\cD$-modules with differentials of degree $0$).
Therefore the degrees of the vertical maps are all equal, and we only have to determine the degree of the rightmost morphism
$\varphi^r$. Recall that it is defined by sending $1$ to $(d\theta\times dt)\otimes(n\vol/dh)$, hence, $\deg(\varphi^r)=n+1$.
It follows that $\deg(\psi')=n+1$. The morphism $\psi:\cD\rightarrow G(*D)\otimes_{\cO}\cD$ from diagram \eqref{diag:HolMerDual}
is the same as $\psi'$, but defined
on $\cD$ rather than on  $\cD[n+1]$, hence $\deg(\psi)=2n+2$. It follows that we have $\deg(\Psi)=2n+2$, where
$\Psi:G(*D)^\vee \rightarrow \bD\cG$ is the isomorphism induced by the dual of $\psi$.
Now consider again diagram \eqref{diag:PhiInduced} from above.
From $\deg(\varphi)=0$ and $\deg(\Phi)=\deg(\iota^*\Psi)=2n+2$ we deduce $\deg(\Phi^{\mathit{mer}})=0$, as required.
\end{proof}

As a consequence, we obtain a quite precise result on the behavior of the pairing $S$ with respect
to the bases $\underline{\omega}$ and $\underline{\widetilde{\omega}}$ from proposition \ref{prop:GoodBasis}.
\begin{theorem}
\label{theo:VSSolution}
Let $D\subset V$ be reductive and $f\in V^\vee$ generic. Then
\begin{enumerate}
\item
The basis $\underline{\widetilde{\omega}}$ from proposition \ref{prop:GoodBasis}
yields a $(V^+,S)$-solution (in the sense of \cite[appendix B.d]{DS}) to the Birkhoff problem in family of $(G_0(*D),\nabla)$ and more precisely, we have
\begin{equation}\label{eq:Sconstant-1}
S(\widetilde{\omega}_i,\widetilde{\omega}_j) \in \dC \cdot \theta^{n-1}\cdot t^{2k}\cdot\delta_{i+j,n+1}
\end{equation}
for some $k\in\dN$.
\item
The basis $\underline{\omega}$ from proposition \ref{prop:GoodBasis}
is a $(V^+,S)$-solution to the Birkhoff problem of $(G(\log\,D)/t\cdot G(\log\,D),\nabla)$ and more precisely, we have
\begin{equation}\label{eq:Sconstant-2}
S(\omega_i,\omega_j) \in \dC \cdot \theta^{n-1}\cdot \delta_{i+j,n+1}
\end{equation}
\item
We have $S(G_0(\log\,D),G_0(\log\,D))\subset \theta^{n-1}\dC[\theta,t]$ (this is the second part of \cite[conjecture 5.5.]{dGMS}).
\item
Put $\widehat{G}(*D):=\oplus_{i=1}^n \cO_{\dP^1\times T^*} \cdot \widetilde{\omega}_i$  and
$\widehat{G}(\log\,D):=\oplus_{i=1}^n \cO_{\dP^1\times T} \cdot \omega_i$, , then $S$ induces non-degenerate pairings
$$
S:\widehat{G}(*D) \otimes \iota^*\widehat{G}(*D) \rightarrow \cO_{\dP^1\times T^*}(-(n-1),n-1)
$$
and
$$
S:\left(\widehat{G}(\log\,D)/t\cdot\widehat{G}(\log\,D)\right) \otimes \iota^*\left(\widehat{G}(\log\,D)/t\cdot \widehat{G}(\log\,D)\right) \rightarrow \cO_{\dP^1}(-(n-1),n-1).
$$
where $\cO_{\dP^1\times T^*}(a,b)$ (resp. $\cO_{\dP^1}(a,b)$) is the subsheaf of $\cO_{\dP^1\times T^*}(*\{0,\infty\}\times T^*)$ (resp.
$\cO_{\dP^1}(*\{0,\infty\})$) of meromorphic functions with a pole of order $a$ along $\{0\}\times T^*$ (resp. at $0$) and a pole of order $b$ along $\{\infty\}\times T^*$ (resp. at $\infty$).
\end{enumerate}
\end{theorem}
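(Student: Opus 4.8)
The plan is to reduce all four parts to determining $S(\widetilde\omega_i,\widetilde\omega_j)$ and $S(\omega_i,\omega_j)$ as elements of $\dC[\theta,\theta^{-1},t,t^{-1}]$, from three constraints played off against each other: the vanishing order along $\theta=0$ (theorem \ref{theo:PairingLattice}), the homogeneity of $S$ (lemma \ref{lem:Homog}), and the pole order along $\theta=\infty$ coming from the $V^+$-solution property (proposition \ref{prop:GoodBasis}). As a first step I record that by lemma \ref{lem:Homog} each $S(\widetilde\omega_i,\widetilde\omega_j)$ is homogeneous of degree $\deg\widetilde\omega_i+\deg\widetilde\omega_j$ for the grading with $\deg\theta=1$ and $\deg t=n$, while by theorem \ref{theo:PairingLattice} it lies in $\theta^{n-1}\dC[\theta,t,t^{-1}]$, i.e. it is divisible by $\theta^{n-1}$ and has no further pole along $\theta=0$.

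The decisive step, and the one I expect to be the main obstacle, is to bound the pole order at $\theta=\infty$ by $n-1$ and to identify which entries survive, so as to kill the higher powers of $\theta$ still allowed by the two constraints above. Here I would exploit that, $\underline{\widetilde\omega}$ being a $V^+$-solution of the Birkhoff problem in a family, the bundle $\widehat G(*D)=\oplus_i\cO_{\dP^1\times T^*}\widetilde\omega_i$ carries a connection with a logarithmic pole at $\theta=\infty$ whose residue is $\widetilde A_\infty=\diag(\widetilde\nu_i)$. Writing out the flatness of $S$ in the coordinate $\theta^{-1}$ at infinity — and keeping track of the action of $\iota$ on this residue — each entry solves a regular--singular system whose leading exponent at infinity is the residue sum $\widetilde\nu_i+\widetilde\nu_j$; since the entries are Laurent polynomials in $\theta$, a nonzero entry requires this sum to be an integer and has its pole order there bounded by it. The symmetry of the spectral numbers (proposition \ref{prop:GoodBasis}) couples $i$ with $n+1-i$ and caps the attainable order at $n-1$, so that only the entries with $i+j=n+1$ survive. (Alternatively one may repeat the lattice computation of theorem \ref{theo:PairingLattice} after completing at $\theta=\infty$, where the pole is merely logarithmic; this is technically lighter.) Intersecting the two bounds gives $S(\widetilde\omega_i,\widetilde\omega_j)\in\theta^{n-1}\dC[t,t^{-1}]\cdot\delta_{i+j,n+1}$, which is precisely the statement that $S$ takes values in $\cO_{\dP^1\times T^*}(-(n-1),n-1)$; non-degeneracy of $S$ makes the surviving anti-diagonal entries nonzero, establishing the first pairing of part 4.

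For part 1 it remains to pin the $t$-dependence. Homogeneity forces $S(\widetilde\omega_i,\widetilde\omega_j)\in\dC\cdot\theta^{n-1}t^{b}\delta_{i+j,n+1}$ with $nb=\deg\widetilde\omega_i+\deg\widetilde\omega_j-(n-1)$. That $b=2k$ is a non-negative even integer I would read off from the explicit relation in \cite{dGMS} between $\underline{\widetilde\omega}$ and $\underline\omega$: the former arises from the latter by multiplying individual sections by non-negative powers of $t$ (the \emph{wrapping} enforcing $\widetilde\nu_1-\widetilde\nu_n\le 1$) together with a reindexing commuting with $i\mapsto n+1-i$, which both transports the computation below for $\underline\omega$ to $\underline{\widetilde\omega}$ and exhibits the exponent as even.

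Finally, part 2 is the same analysis run for the logarithmic basis $\underline\omega$, a $V^+$-solution of the logarithmic Brieskorn lattice, where the relevant bundle at infinity is $\widehat G(\log D)/t\widehat G(\log D)=\oplus_i\cO_{\dP^1}\omega_i$ with residue $A_\infty=\diag(\nu_i)$; the same symmetry produces the anti-diagonal form, and since here $\deg\omega_i=i-1$ exactly, homogeneity forces $b=0$, giving $S(\omega_i,\omega_j)\in\dC\cdot\theta^{n-1}\delta_{i+j,n+1}$. Part 3 is then immediate: as $G_0(\log D)=\oplus_i\dC[\theta,t]\omega_i$ and $\iota$ fixes $t$, one has $S(G_0(\log D),G_0(\log D))\subset\sum_{i,j}\dC[\theta,t]\,S(\omega_i,\omega_j)\subset\theta^{n-1}\dC[\theta,t]$; and the second pairing of part 4 is the global reformulation of part 2 over $\dP^1$, with non-degeneracy again read off from the anti-diagonal matrix.
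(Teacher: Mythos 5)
Your overall strategy---playing off the $\theta^{n-1}$-divisibility from theorem \ref{theo:PairingLattice}, the homogeneity from lemma \ref{lem:Homog}, and a pole bound at $\theta=\infty$---is reasonable, and your deductions of part 3 from part 2 and of part 4 from parts 1 and 2 are fine. The gap is in what you yourself identify as the decisive step: the bound at $\theta=\infty$. First, the flatness computation in the coordinate $\tau=\theta^{-1}$ does not give an entry-wise bound of the pole order of $S(\widetilde{\omega}_i,\widetilde{\omega}_j)$ by $\widetilde{\nu}_i+\widetilde{\nu}_j$: the connection matrix at $\tau=0$ is $-\widetilde{A}_\infty\frac{d\tau}{\tau}-A_0\,d\tau$, and the $A_0$-part couples distinct entries, so the indicial argument only identifies the exponent of the entries realizing the \emph{globally} worst pole; entries with milder poles can be fed by worse ones. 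Second, and more seriously, even the entry-wise bound by $\widetilde{\nu}_i+\widetilde{\nu}_j$ would not ``cap the attainable order at $n-1$'': the symmetry $\nu_{\sigma(i)}+\nu_{\sigma(n+1-i)}=n-1$ says nothing about $\nu_i+\nu_j$ when $i+j\neq n+1$, and, e.g., in the normal crossing case one has $\nu_i=i-1$, so $\nu_i+\nu_j=i+j-2$ ranges up to $2n-2$ and your three constraints leave the degree-zero-in-$t$ entries with $i+j>n+1$ (of the form $c\cdot\theta^{i+j-2}$) completely unconstrained. The uniform bound $n-1$ at $\theta=\infty$ is essentially equivalent to part 4, so the argument is circular at this point. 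Your parenthetical alternative (redoing theorem \ref{theo:PairingLattice} after completing at $\theta=\infty$) is not ``technically lighter'': the lattice at infinity is defined by the basis $\widetilde{\underline{\omega}}$ rather than intrinsically by the presentation $\cG$, and one would need a new strictness statement for a filtration adapted to $\tau=0$, i.e.\ a compatibility of the holonomic duality with the $V$-filtration at infinity, which is established nowhere.

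The paper avoids this issue entirely. It first uses theorems \ref{theo:Pairing}, \ref{theo:PairingLattice} and lemma \ref{lem:Homog} to determine only the rows and columns indexed by those $i$ with $\nu_i=\nu_{\sigma(1)}$ (the minimal spectral number), where degree reasons do suffice, and then invokes the inductive propagation argument from the proof of \cite[theorem 4.13]{dGMS}: flatness of $S$ together with the explicit connection matrix \eqref{eq:Basis} (under which $\theta t\nabla_t$ sends $\omega_i$ to a combination of $\omega_i$ and $\omega_{i+1}$) determines all remaining entries from the known ones. Your proposal contains no substitute for this propagation step, and that step is precisely the ingredient that allows the paper to drop the multiplicity hypothesis of \cite{dGMS} instead of relying on an unproven $V$-filtration compatibility at $\theta=\infty$.
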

\begin{proof}
It was shown in \cite[corollary 4.12]{dGMS} that $\underline{\widetilde{\omega}}$ (resp. $\underline{\omega}$) is a $V^+$-solution
for $(G_0(*D),\nabla)$ (resp. for $(G_0(\log\,D)/t\cdot G_0(\log\,D),\nabla)$). Recall from
\cite[appendix B.d]{DS} (see also \cite[lemma-definition 4.6]{dGMS}) that $\underline{\widetilde{\omega}}$ (resp. $\underline{\omega}$) is compatible
with $S$ (i.e., a $S$-solution) iff $S(\widetilde{\omega}_i,\widetilde{\omega}_j)\in\theta^{n-1}\cdot\dC[t,t^{-1}]$ for all $i\in\{1,\ldots,n\}$
(resp. $S(\omega_i,\omega_j)\in\theta^{n-1}\cdot\dC$ for all $i\in\{1,\ldots,n\}$).

%

In order to show the statements 1. and 2., we use the proof of \cite[theorem 4.13]{dGMS}, where an additional hypothesis on the
multiplicity of the smallest spectral number $\nu_{\sigma(1)}$ was made. However, as $\deg(\omega_i)=i-1$ (resp. $\deg(\widetilde{\omega})=i-1+k\cdot n$ for some $k\in\dN$)
 we deduce from the theorems \ref{theo:Pairing} and \ref{theo:PairingLattice} as well as from lemma \ref{lem:Homog} that
whenever we take $i\in\{1,\ldots,n\}$ such that $\nu_i=\nu_{\sigma(1)}$, then $S(\omega_i,\omega_k) \in \dC\cdot\delta_{i+j,n+1}\cdot\theta^{n-1}$
and $S(\widetilde{\omega}_i,\widetilde{\omega}_k)\in
\dC \cdot \theta^{n-1}\cdot t^{2k}\cdot\delta_{i+j,n+1}$.
Then the proof of theorem 4.13 in loc.cit. shows that this is true for any $i\in\{1,\ldots,n\}$,
that is, we obtain the formulas \eqref{eq:Sconstant-1} and \eqref{eq:Sconstant-2}, but also
the statements 3. and 4 from above.
\end{proof}

We can now give the promised application of the above results. It consist of a sharpening of
theorems 5.1, 5.7 and 5.9 from \cite{dGMS}. We we do not need anymore to make an assumption
on the multiplicity of the spectral number at infinity of $f_{|h^{-1}(t)}$ and
we also know that \cite[conjecture 5.5.]{dGMS} holds. Using this, the proofs of the theorems
below are the same as in loc.cit. and are therefore omitted.
We always suppose that $D$ is reductive and $f\in V^\vee$ generic.

The first result is the construction of a Frobenius manifold structure
on a miniversal deformation space of the restriction $f_{|h^{-1}(t)}$.
\begin{theorem}{\cite[theorem 5.1]{dGMS}}
Consider a semi-universal unfolding $F:B_t\times M_t\rightarrow D_\delta$
as in loc.cit. theorem 5.1 (where $D_\delta\subset \dC$ is a small disc and $B_t=h^{-1}(t)\cap B_\epsilon$, with $M_t$ and $B_\epsilon$ being balls
in $\dC^n$). Then (any non-zero multiple of) any of the section $\omega_i$ from proposition \ref{prop:GoodBasis} is primitive and homogenous and induces a Frobenius
structure on $M_t$.
\end{theorem}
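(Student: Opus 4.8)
The plan is to feed the data assembled in the previous sections into the abstract reconstruction scheme of \cite{DS}, exactly as was done in \cite[theorem 5.1]{dGMS}. Recall that, in order to produce a Frobenius structure on the base of a semi-universal unfolding of $f_{|h^{-1}(t)}$, it suffices to exhibit for the Gau\ss-Manin data $(G_0(*D),\nabla,S)$ a section $\omega$ which is simultaneously (i) a $(V^+,S)$-solution to the Birkhoff problem, (ii) homogeneous for the Euler grading, and (iii) primitive, in the sense that the infinitesimal period map attached to $\omega$ is an isomorphism onto $G_0(*D)/\theta\,G_0(*D)$. Once these three properties are verified for a given $\omega_i$, the construction of \cite{DS} applies word for word, and the freedom to replace $\omega_i$ by a non-zero (homogeneous) multiple only reflects the standard rescaling ambiguity of a primitive section.

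First I would collect the ingredients already at hand. The connection $\nabla$ and a $V^+$-solution are furnished by proposition \ref{prop:GoodBasis}; the pairing $S$ exists by theorem \ref{theo:Pairing}, is compatible with the lattice $G_0(*D)$ by theorem \ref{theo:PairingLattice}, and is homogeneous of degree $0$ by lemma \ref{lem:Homog}. The decisive point---which in \cite[theorem 5.1]{dGMS} still required an auxiliary assumption on the multiplicity of the smallest spectral number $\nu_{\sigma(1)}$---is that $\underline{\omega}$ is actually a $(V^+,S)$-solution; this is now available unconditionally from theorem \ref{theo:VSSolution} together with the precise shape \eqref{eq:Sconstant-2} of the pairing. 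Property (ii) is then immediate, since $\deg(\omega_i)=i-1$ makes each $\omega_i$ an eigenvector of the grading.

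The one point genuinely requiring an argument is primitivity (iii). Here I would invoke the identification, established in the proof of theorem \ref{theo:GMSystemCyclic}, of $G_0(*D)/\theta\,G_0(*D)$ with the relative Jacobian algebra,
$$
G_0(*D)/\theta\,G_0(*D)\;\cong\; H^0(V,\cT^1_{\cR_h/T}(f))\;\cong\; \dC[t,t^{-1},\mu]/((t\mu)^n-\tfrac{c}{n^n}\,t),
$$
where $\mu$ is the class of $\theta\partial_t$ and where, up to a non-zero scalar, $[\omega_{i+1}]$ corresponds to $(t\mu)^{i}$. The defining relation $(t\mu)^n=\tfrac{c}{n^n}\,t$ exhibits $t\mu$ as a \emph{unit} of this algebra, hence so is every power $(t\mu)^{i}$. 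Therefore each class $[\omega_i]$ generates $G_0(*D)/\theta\,G_0(*D)$ as a module over the Jacobian algebra, which is exactly the assertion that the infinitesimal period (Kodaira--Spencer) map attached to $\omega_i$---whose target is this algebra, realised through the semi-universal unfolding $F$---is an isomorphism. This invertibility of $t\mu$ is precisely what makes \emph{every} $\omega_i$ primitive, rather than only a single distinguished section.

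With (i)--(iii) in place, the reconstruction theorem of \cite{DS} produces the Frobenius structure on $M_t$; the Hertling--Manin universality conditions on $F$ are inherited from the geometric setup of \cite{dGMS}, $f_{|h^{-1}(t)}$ having only isolated singularities. I expect the subtle part of writing this out in full to be not the final assembly but the bookkeeping that confirms that lemma \ref{lem:Homog} and theorem \ref{theo:VSSolution} really do dispense with the multiplicity hypothesis of loc.cit.; granting that, primitivity of all the $\omega_i$ follows formally from the unit property of $t\mu$, and no further singularity-theoretic input is needed.
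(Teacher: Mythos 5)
Your proposal is correct and follows essentially the route the paper intends: the paper itself omits the proof, stating explicitly that once theorems \ref{theo:Pairing}, \ref{theo:PairingLattice}, \ref{theo:VSSolution} and lemma \ref{lem:Homog} remove the multiplicity hypothesis on $\nu_{\sigma(1)}$ and establish \cite[conjecture 5.5]{dGMS}, the argument of \cite[theorem 5.1]{dGMS} applies verbatim. Your reconstruction of that argument --- feeding the $(V^+,S)$-solution into the scheme of \cite{DS}, reading off homogeneity from $\deg(\omega_i)=i-1$, and deducing primitivity of every $\omega_i$ from the fact that $t\mu$ is a unit in $\dC[t,t^{-1},\mu]/((t\mu)^n-\tfrac{c}{n^n}t)$ --- is exactly the intended one.
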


The next result gives the construction of a Frobenius structure at $t=0$, that one may
see as associated to the restriction of $f$ to $D$.
\begin{theorem}{(see also \cite[theorem 5.7]{dGMS})}
\label{theo:LimitFrobenius}
The germ at the origin of the $\cR_h$-miniversal deformation space of $f$ (called $M_0$) carries a constant
Frobenius structure (i.e., such that the structure constant of the multiplication are constant in flat coordinates).
\end{theorem}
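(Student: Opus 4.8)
The plan is to deduce the statement from the general construction of Frobenius structures out of a $(V^+,S)$-solution to the Birkhoff problem, exactly as in \cite[theorem 5.7]{dGMS}, the only difference being that the hypotheses of that construction are now unconditionally available through theorems \ref{theo:Pairing}, \ref{theo:PairingLattice} and \ref{theo:VSSolution}. Concretely, I would place myself at the limit point $t=0$ and work with the logarithmic Brieskorn lattice $L:=G_0(\log\,D)/t\cdot G_0(\log\,D)$, endowed with the connection $\nabla$ from proposition \ref{prop:GoodBasis} and with the pairing induced by $S$, which by theorem \ref{theo:VSSolution}, 3. takes values in $\theta^{n-1}\dC[\theta,t]$ on $G_0(\log\,D)$.

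The first step is to assemble the input data. By proposition \ref{prop:GoodBasis}, 1. the basis $\underline{\omega}$ is a $V^+$-solution to the Birkhoff problem of $L$, and by theorem \ref{theo:VSSolution}, 2. it is in fact a $(V^+,S)$-solution, with the rigid antidiagonal normalization $S(\omega_i,\omega_j)\in\dC\cdot\theta^{n-1}\cdot\delta_{i+j,n+1}$. This $t$-independent shape of the pairing is precisely what forced the hypothesis on the multiplicity of the smallest spectral number $\nu_{\sigma(1)}$ in the proof of \cite[theorem 5.7]{dGMS}; here it follows from the homogeneity of $S$ (lemma \ref{lem:Homog}) together with the lattice compatibility of theorem \ref{theo:PairingLattice}, so that this extra assumption can now be dropped.

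The second step is the unfolding. As recalled in the quoted theorem 5.1 above, any non-zero multiple of a section $\omega_i$ is a primitive and homogeneous section of $L$. Feeding this section, the flat pairing from theorem \ref{theo:VSSolution}, 4. and the $(V^+,S)$-solution property into the universal unfolding procedure of \cite[appendix B.d]{DS} (as carried out in \cite[section 5]{dGMS}) produces a Frobenius structure on the germ $(M_0,0)$ of the $\cR_h$-miniversal deformation space of $f$, whose flat metric comes from $S$ and whose Euler field comes from the grading of lemma \ref{lem:Homog}.

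Finally, the constancy of this structure is dictated by the shape of the connection at $t=0$: specializing the matrix $A_0$ of proposition \ref{prop:GoodBasis} to $t=0$ annihilates its single nonzero off-block entry $c\cdot t$, so that $A_0|_{t=0}$ is the constant regular nilpotent Jordan block (the base-point multiplication is thus the nilpotent one of $\dC[\mu]/(\mu^n)$, as at the large radius limit). Since the entire construction is homogeneous of degree $0$, the multiplication tensor stays governed by this constant matrix along the flat coordinates, whence the structure constants are constant. I expect no genuine obstacle to remain at this stage: the only substantial difficulty — the existence, non-degeneracy, lattice compatibility and homogeneity of $S$, which were conjectural in loc.cit. — has been settled in the preceding theorems, and the remaining argument is identical to that of \cite[theorem 5.7]{dGMS}.
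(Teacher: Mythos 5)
Your proposal is correct and follows essentially the same route as the paper, which itself omits the argument and simply observes that the proof of \cite[theorem 5.7]{dGMS} now goes through unconditionally because the multiplicity hypothesis on the smallest spectral number and \cite[conjecture 5.5]{dGMS} are supplied by theorems \ref{theo:Pairing}, \ref{theo:PairingLattice}, \ref{theo:VSSolution} and lemma \ref{lem:Homog}. Your added details (the $(V^+,S)$-solution $\underline{\omega}$ for the logarithmic Brieskorn lattice, the antidiagonal form of $S$, and the nilpotency of $A_0|_{t=0}$ forcing constancy) are consistent with that reduction.
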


Finally, we also obtain the structure of a ''weak logarithmic Frobenius manifold'' associated to the couple $(f,h)$
(see \cite{Reich1} for the definition of a logarithmic Frobenius manifold and \cite[definition 5.8]{dGMS} for a variant
called ''weak logarithmic'').
\begin{theorem}{(see also \cite[theorem 5.9]{dGMS})}
\label{theo:WeakLogFrob}
The module $G'$ from loc.cit., theorem 5.9 underly a weak log $\Sigma$-trTLEP-structure and the form $t^{-k}\widetilde{\omega}_i=t^{-k}\omega_i^{(3)}$ (where $i$ is
the index from loc.cit., lemma 5.3) is primitive and homogenous. It yields a weak logarithmic Frobenius manifold associated to $(f,h)$.
\end{theorem}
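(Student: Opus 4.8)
The plan is to verify that the module $G'$ constructed in \cite[theorem 5.9]{dGMS} satisfies the axioms of a weak log $\Sigma$-trTLEP-structure in the sense of \cite{Reich1} and \cite[definition 5.8]{dGMS}, and then to feed the resulting data into the general reconstruction theorem for weak logarithmic Frobenius manifolds, with $t^{-k}\widetilde{\omega}_i$ as primitive form. The connection part of these axioms --- the pole order along $\{0,\infty\}\times T$ together with the logarithmic behaviour along $t=0$ --- was already extracted in loc.\ cit.\ directly from the shape of the connection matrix in proposition \ref{prop:GoodBasis}. Hence the only genuinely new ingredient needed is the flat pairing, and this is exactly what theorem \ref{theo:VSSolution} now supplies unconditionally.

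First I would recall the precise definition of $G'$ and of the index $i$ from \cite[lemma 5.3]{dGMS}, together with the list of pairing axioms of a weak log trTLEP-structure, and then equip $G'$ with the pairing furnished by theorem \ref{theo:VSSolution}, part 4, namely
$$
S:\widehat{G}(*D)\otimes\iota^*\widehat{G}(*D)\longrightarrow\cO_{\dP^1\times T^*}(-(n-1),n-1)
$$
together with its logarithmic counterpart on $\widehat{G}(\log\,D)/t\cdot\widehat{G}(\log\,D)$. The pole order $(-(n-1),n-1)$ is precisely the one demanded by the trTLEP axiom; the $(-1)^{n-1}$-symmetry and the flatness of $S$ were established in theorem \ref{theo:Pairing}, and the compatibility with the lattice $G_0(*D)$ comes from theorem \ref{theo:PairingLattice}. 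This verifies all the pairing axioms for $G'$.

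Next I would check that $t^{-k}\widetilde{\omega}_i=t^{-k}\omega_i^{(3)}$ is primitive and homogeneous. The $t^{-k}$ twist is calibrated exactly so that, by \eqref{eq:Sconstant-1},
$$
S(t^{-k}\widetilde{\omega}_i,t^{-k}\widetilde{\omega}_j)=t^{-2k}S(\widetilde{\omega}_i,\widetilde{\omega}_j)\in\dC\cdot\theta^{n-1}\cdot\delta_{i+j,n+1},
$$
that is, the pairing of the twisted sections is independent of $t$, which is what the logarithmic limit at $t=0$ requires. Homogeneity of degree $0$ of $S$ (lemma \ref{lem:Homog}) then shows that $t^{-k}\widetilde{\omega}_i$ is an eigenvector for the Euler field, hence homogeneous in the required sense, while primitivity follows from the $(V^+,S)$-solution property recorded in theorem \ref{theo:VSSolution}, part 1, in the sense of \cite[appendix B.d]{DS}, which guarantees that $\widetilde{\omega}_i$ maps to a cyclic generator of $G_0(*D)/\theta\cdot G_0(*D)$ under the connection. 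Crucially, \eqref{eq:Sconstant-1} holds for \emph{every} pair $(i,j)$, and it is this unconditional anti-diagonal shape that now replaces the hypothesis on the multiplicity of the spectral number at infinity imposed in \cite[theorem 5.9]{dGMS}.

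Finally I would apply the reconstruction procedure of \cite{Reich1} to the data $(G',S,t^{-k}\widetilde{\omega}_i)$ to produce the weak logarithmic Frobenius manifold associated to $(f,h)$. The main obstacle is not this last, formal step, but the bookkeeping in the two preceding ones: one must track the $V^+$-filtration and the pole orders of $S$ coherently as $t\to0$, so that the generic trTLEP-structure and its logarithmic limit glue into a single weak log structure. Since theorems \ref{theo:Pairing}, \ref{theo:PairingLattice} and \ref{theo:VSSolution}, together with lemma \ref{lem:Homog}, provide precisely these compatibilities, the remainder of the argument is identical to the proof of \cite[theorem 5.9]{dGMS}, which is why it may be omitted.
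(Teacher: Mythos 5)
Your proposal follows essentially the same route as the paper: the paper itself omits the proof, noting only that once theorems \ref{theo:Pairing}, \ref{theo:PairingLattice} and \ref{theo:VSSolution} (in particular the unconditional anti-diagonal form \eqref{eq:Sconstant-1} and the validity of \cite[conjecture 5.5]{dGMS}) are available, the argument of \cite[theorem 5.9]{dGMS} goes through verbatim without the hypothesis on the multiplicity of the spectral number at infinity, which is exactly the reduction you carry out. Your identification of where each new ingredient enters is accurate; the only slight imprecision is attributing primitivity to the $(V^+,S)$-solution property rather than to the choice of index in \cite[lemma 5.3]{dGMS}, but this does not affect the argument.
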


\bibliographystyle{amsalpha}
\def\cprime{$'$} \def\cprime{$'$}
\providecommand{\bysame}{\leavevmode\hbox to3em{\hrulefill}\thinspace}
\providecommand{\MR}{\relax\ifhmode\unskip\space\fi MR }
\providecommand{\MRhref}[2]{%
  \href{http://www.ams.org/mathscinet-getitem?mr=#1}{#2}
}
\providecommand{\href}[2]{#2}

\vspace*{1cm}

\nd
Lehrstuhl f\"ur Mathematik VI \\
Institut f\"ur Mathematik\\
Universit\"at Mannheim,
A 5, 6 \\
68131 Mannheim\\
Germany

\vspace*{1cm}

\nd
Christian.Sevenheck@math.uni-mannheim.de

\end{document}